\newtheorem{lemma}{Lemma}[section]
\newtheorem{theorem}[lemma]{Theorem}
\newtheorem{corollary}[lemma]{Corollary}
\newtheorem{definition}[lemma]{Definition}
\newtheorem{remark}[lemma]{Remark}
\newtheorem{proposition}[lemma]{Proposition}
\def\desc{{\rm desc}}
\def\anc{{\rm anc}}
\def\N{\mathbb{N}}
\def\C{\mathcal{C}}
\def\CB{\bar{\mathcal{C}}}
\def\DG{D_\Gamma}
\def\l{\ell}
\def\Aut{{\rm Aut}}
\def\v{\mathbf{v}}
\def\w{\mathbf{w}}
\def\T{\mathcal{T}}
\def\u{\mathbf{u}}
\def\z{\mathbf{z}}
\begin{document}

\title[]{Infinite primitive and distance transitive directed graphs of finite out-valency }

\authors{

\author{Daniela Amato}

\address{%
Departamento de Matematica\\
Universidade de Brasilia\\
Campus Universit\'ario Darcy Ribeiro, Bras\'{\i}lia
CEP 70910-900\\
Brazil. }

\email{d.a.amato@mat.unb.br}

\author{David M. Evans}

\address{%
School of Mathematics\\
University of East~Anglia\\
Norwich NR4~7TJ\\
UK.}

\email{d.evans@uea.ac.uk}

}

\date{}

\begin{abstract} We give certain properties which are satisfied by the descendant set of a vertex in an infinite,  primitive, distance transit\-ive digraph of finite out-valency and provide a strong structure theory for digraphs satisfying these properties. In particular, we show that there are only countably many possibilities for the isomorphism type of such a descendant set, thereby confirming a conjecture of the first Author. As a partial converse, we show that certain related conditions on a countable digraph are sufficient for it to occur as the descendant set of a primitive, distance transitive digraph.
\newline
\textit{2010 Mathematics Subject Classification:\/}   05C20, 05E18, 20B07, 20B15.
\end{abstract}
\maketitle
\setcounter{footnote}{1}\footnotetext{This work was supported by EPSRC grant EP/G067600/1}
\setcounter{footnote}{2}\footnotetext{Date: 9 August 2012; revised 9 April 2014}

\section{Introduction}

\subsection{Background and main results} We begin with an overview of the paper. Most of the terminology is standard and definitions can be found in the next subsection.  

We are interested in the construction and classification of infinite, vertex transitive directed graphs of finite out-valency whose automorphism groups have additional transitivity properties, such as primitivity, distance transitivity or high arc transitivity. In contrast to the finite case where powerful tools from finite group theory are available, there is no possibility of a complete description of such digraphs. Instead, our results will focus on the structure of the \textit{descendant set} of a vertex in such a digraph: this is the induced subdigraph on the set of vertices reachable from the given vertex by an outward-directed path. Much of the motivation for the work comes from  questions of Peter M. Neumann on infinite permutation groups, and work on highly arc transitive digraphs originating in \cite{Cameronetal}.

In \cite{Neumann}, Neumann asked whether there exists a primitive permutation group having an infinite suborbit which is paired with a finite suborbit. This amounts to asking whether there is a digraph with infinite in-valency and finite out-valency whose automorphism group is transitive on edges and primitive on vertices. Countable digraphs of this sort were constructed in \cite{Evans2} using amalgamation methods developed in model theory (cf. \cite{Cameron1} for background on such methods). In these examples, the descendant sets are directed trees, and the resulting examples are also highly arc transitive. Similar methods were used in \cite{EmmsEvans} to construct continuum-many non-isomorphic countable, primitive, highly arc transitive digraphs all with isomorphic descendant sets. So this  suggests that a classification of such digraphs is out of the question, even under the very strong assumption of high arc transitivity. Nevertheless,  Neumann (private communication) suggested that a classification of the \textit{descendant sets} in these digraphs might be possible, at least under stronger hypotheses on the automorphism group of the digraph.

Descendant sets in highly arc transitive digraphs of finite out-valency were studied by the first Author in \cite{amato, Amato}, following on from results obtained by M\"oller for locally finite, highly arc transitive digraphs in \cite{Moller4}.  This work isolates a small number of quite simple properties (essentially P0, P1, P3 of Section \ref{primitivesection} here) satisfied by such descendant sets and shows that these properties have rather strong structural consequences. In particular, the descendant set admits a non-trivial, finite-to-one homomorphism onto a tree. Digraphs having the given properties, but which are not trees are constructed in \cite{Amato, Moller4}. Moreover, (imprimitive) highly arc transitive digraphs having these as descendant sets are constructed in \cite{Amato, AT, AET}. 

It was conjectured in \cite{Amato}  that there are only countably many directed graphs with the properties for a descendant set isolated in \cite{Amato}. In Section \ref{structuresection} we reprove some of the results of \cite{Amato} in a slightly wider context and prove the conjecture. In particular, we have the following (note that a highly arc transitive digraph is distance transitive).

\begin{theorem}\label{mainresultA} Suppose that $D$ is a distance transitive digraph of finite out-valency. Assume either that $D$ has infinite in-valency, or that it has no directed cycles. Let $\Gamma = \Gamma_D$ be the descendant set of $D$. Then there are natural numbers $k(\Gamma)$ and  $M(\Gamma)$ with the property that if $\Gamma^{\leq M(\Gamma)}$ denotes the induced subdigraph on the set of vertices in $\Gamma$ which can be reached from the root by a directed path of length at most $M(\Gamma)$, then $\Gamma$ is determined up to isomorphism by $k(\Gamma)$, $M(\Gamma)$ and the finite digraph $\Gamma^{\leq M(\Gamma)}$. 
\end{theorem}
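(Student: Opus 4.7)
My strategy is to exploit the tree quotient from the earlier Amato--M\"oller framework together with distance transitivity to force a strong form of self-similarity on the descendants. First I would verify that, under the hypotheses of the theorem, the descendant set $\Gamma$ satisfies the three properties (P0, P1, P3) referred to in the introduction, and so by the structure theory developed in Section~\ref{structuresection} admits a nontrivial finite-to-one digraph homomorphism $f : \Gamma \to T_k$, where $T_k$ is a rooted tree in which every vertex has out-valency exactly $k$. I would then set $k(\Gamma) = k$. The fibres $f^{-1}(t)$ are finite; and, crucially, because $\Aut(D)_v$ acts transitively on vertices at each fixed out-distance from $v$ (distance transitivity), both the fibre sizes and the local isomorphism types of $\Gamma$ along any root-to-descendant path depend only on the depth in the tree, not on the particular vertex chosen.

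Next I would define $M(\Gamma)$ as a stabilisation index. Concretely, for each $m\ge 1$ consider the rooted isomorphism type $L_m$ of the induced subdigraph $\Gamma^{m-1,m}$ on the union of fibres of a fixed tree-edge $(t_{m-1},t_m)$ together with the one-step neighbourhoods needed to record how new arcs attach. Distance transitivity makes $L_m$ independent of the edge chosen at that depth, and the bound on fibre sizes guarantees that there are only finitely many possibilities for the pair $(L_m, L_{m+1})$ up to isomorphism. Using that the homomorphism $f$ is finite-to-one and that the arcs go ``outwards'' (no directed cycles, or else infinite in-valency together with primitivity preventing collapse), one can show that for some least $M$ the pattern $L_{M+1}$ is forced by $L_M$, and hence the sequence $(L_m)_{m \ge M}$ is eventually periodic --- in fact constant after renaming --- giving the required $M(\Gamma)$.

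Having fixed $k$ and $M$, I would prove the uniqueness statement by a back-and-forth construction along the tree $T_k$. Suppose $\Gamma$ and $\Gamma'$ are two descendant sets arising in this way with the same invariants $k$, $M$ and isomorphic $\Gamma^{\leq M} \cong \Gamma'^{\leq M}$. Starting from a rooted isomorphism $\varphi_M : \Gamma^{\leq M} \to \Gamma'^{\leq M}$, I would extend $\varphi_M$ level by level. At each stage the extension from depth $m$ to depth $m+1$ is determined up to a choice of matching between fibres; by the stabilisation property and distance transitivity, the induced automorphism groups act transitively on the set of valid matchings, so any extension from $\Gamma^{\leq m}$ to $\Gamma^{\leq m+1}$ can be transported to $\Gamma'$. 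A standard union of the chain yields the global isomorphism $\Gamma \cong \Gamma'$.

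The main obstacle, and where the real content lies, is the stabilisation step: one must prove that the \emph{local} isomorphism type of a single ``layer'' $L_m$ really does determine $L_{m+1}$ once $m \ge M$. This is where distance transitivity is essential (it supplies the needed homogeneity between different branches of $T_k$), and where the alternative hypotheses --- infinite in-valency or acyclicity --- are used to rule out degenerate behaviour in which information could otherwise flow backwards along arcs. Once this periodicity/rigidity is secured, the back-and-forth and the identification of the invariants $k(\Gamma)$ and $M(\Gamma)$ are essentially bookkeeping.
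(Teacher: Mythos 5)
Your overall architecture matches the paper's: reduce to the G0/G1/G3 structure theory, pass to the tree quotient, exploit self-similarity coming from $\Gamma(u)\cong\Gamma$ and level-transitivity, fix a stabilisation depth, and then extend an isomorphism of truncations level by level. However, the step you yourself flag as ``where the real content lies'' --- the stabilisation --- is asserted, not proved, and the mechanism you sketch for it would not work. You argue that since there are finitely many possibilities for the pair $(L_m,L_{m+1})$, the sequence of layer types is eventually periodic and hence $L_{M+1}$ is ``forced'' by $L_M$. Eventual periodicity (via pigeonhole) does not give determinacy of the next layer by the current one; and more importantly, determinacy of \emph{isomorphism types} of layers is not what the level-by-level extension needs. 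What is needed is that a partial isomorphism defined on a truncation of depth $\geq M$ \emph{extends}; this is a statement about extending partial automorphisms, not about uniqueness of layer types. The paper obtains it (Lemma \ref{lemmaN}) by considering the descending chain of finite groups $A_d$ of permutations induced on a $\rho$-class by automorphisms of the depth-$d$ balls: this chain stabilises at some $N$ with $A_N=\bigcap_d A_d$, whence any permutation of the class extending to an automorphism of the depth-$N$ ball extends to the whole descendant structure. Proposition \ref{mainprop} then combines this with the self-similarity maps $f_i$ to push a $\rho$-isomorphism of $d$-balls to one of $(d+1)$-balls, and K\"onig's Lemma (Corollary \ref{isom}) produces the global isomorphism. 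Your appeal to ``the induced automorphism groups act transitively on the set of valid matchings'' is precisely this lemma, and distance transitivity alone does not supply it.

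Two further points. First, your $k(\Gamma)$ is taken to be the out-valency of the quotient tree, but that tree need not be regular (Corollary \ref{lemma652} only gives finitely many out-valencies), and this choice of invariant does not do the job the theorem requires: in the uniqueness argument one must be able to recover the equivalence relation $\rho$ (same ancestors $k-1$ levels back) inside the finite ball $\Gamma^{\leq M}$, which is exactly why the paper takes $k(\Gamma)$ to be the minimal G3 parameter and includes it among the invariants. Second, primitivity is not a hypothesis of this theorem, so it cannot be invoked to ``prevent collapse''; the relevant input is only Corollary \ref{15} giving G0, G1, G3. As it stands the proposal is a plausible plan with the central lemma missing.
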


The proof of this is given at the end of Section \ref{structuresection}. Together with Corollary 4.4 of \cite{AET}, it gives a reasonable picture of the descendant sets in distance transitive digraphs of finite out-valency and infinite in-valency: conditions P0, P1, P3 are necessary and sufficient conditions for a digraph to be a descendant set in such a digraph, and there are only countably many digraphs satisfying these conditions. 

In Section \ref{primitivesection} we are interested in descendant sets under the additional assumption of  primitivity. Then main result is:

\begin{theorem} \label{mainresultB} Suppose that a digraph $\Gamma$ of finite out-valency satisfies conditions P0, P1, P2, P3. Then there is a countable primitive digraph $D_\Gamma$ of infinite in-valency with  descendant set $\Gamma$.
\end{theorem}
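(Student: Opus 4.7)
The plan is to build $\DG$ as a Fraïssé-style limit of a class of finite digraphs labelled by $\Gamma$, along the lines of the construction of Corollary~4.4 of \cite{AET}, but using condition P2 to force primitivity. Concretely, I would take $\C$ to be the class of finite digraphs $A$ equipped, at each vertex $a\in A$, with a rooted embedding $\iota_a$ of $\desc_A(a)$ into $(\Gamma,\ast)$, subject to the coherence condition that $\iota_a$ restricted to the descendants of any out-neighbour $b$ of $a$ agrees with $\iota_b$ after the identification made by the unique arc $\ast\to \iota_a(b)$ in $\Gamma$. Closure under induced substructure is immediate, joint embedding is witnessed by the one-vertex labelled structure, and amalgamation is performed by disjoint union followed by the identifications forced by agreement on shared $\Gamma$-labels; the finite-to-one tree homomorphism $\Gamma\to\T$ supplied by Theorem~\ref{mainresultA} and its proof keeps these quotients finite, while P0, P1, P3 guarantee the result remains in $\C$.

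Take $\DG$ to be the Fraïssé limit of $\C$. Homogeneity gives vertex transitivity of $\Aut(\DG)$. The descendant set of any vertex $v\in\DG$ is the directed union of the rooted images $\iota_v(\desc_A(v))$ as $A$ ranges over the finite labelled substructures containing $v$; the extension property of the Fraïssé limit shows this exhausts $\Gamma$, so $\desc(v)\cong\Gamma$. Infinite in-valency follows because, for any finite substructure $A$ containing $v$, one can form a larger structure $A'$ by adjoining a fresh predecessor $v'$ of $v$ with the induced $\Gamma$-labelling in $\C$, and extension allows us to do this infinitely often.

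The genuinely subtle step is primitivity. Let $\sim$ be a non-trivial $\Aut(\DG)$-invariant equivalence relation, and fix distinct vertices $u\sim v$. The role I expect P2 to play is to pin down a finite "separating configuration" inside $\Gamma$ which, when realised in $\DG$, produces an automorphism fixing one vertex of a given pair and moving the other. Using extension I would embed around $u,v$ a finite $A\in\C$ containing such a P2-configuration, then use partial-isomorphism extension in the Fraïssé limit to produce $\sigma\in\Aut(\DG)$ with $\sigma u=u$ and $\sigma v\neq v$; then $u=\sigma u\sim\sigma v$, so the $\sim$-class of $u$ strictly grows. Iterating and invoking vertex transitivity forces this class to be all of $\DG$, contradicting non-triviality. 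The main obstacle is locating the correct separating configuration — that is, extracting from P2 precisely the amalgamation-theoretic statement that the $\Aut(\Gamma)$-orbit structure on pairs of vertices at comparable depth is rich enough to refine any putative $\sim$ — and verifying this configuration is available in $\C$ without compromising the coherence and finiteness conditions used in the earlier steps.
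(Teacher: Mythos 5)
There is a genuine gap, and it is exactly at the step you flag as ``the genuinely subtle step.'' Your primitivity argument --- fix $u\sim v$, produce $\sigma\in\Aut(\DG)$ with $\sigma u=u$, $\sigma v\neq v$, conclude that the $\sim$-class of $u$ ``strictly grows,'' and iterate --- does not prove primitivity. Showing that the class of $u$ is not a singleton and can always be enlarged by adding further points never rules out a proper infinite block: any transitive group preserving a partition into infinite blocks passes this test. What is actually needed (and what the paper does) is D.~G.~Higman's criterion: one must show that \emph{every} orbital digraph $\Delta(a,b)$ is connected. This is where all the work lies. The paper handles the case $\desc(a)\cap\desc(b)=\emptyset$ directly (free amalgamation gives diameter at most $4$), and reduces the general case to it by constructing a finite chain $b_0=a$, $b_1=b$, $b_{i+1}=g_ib_i$ with $g_i\in\Aut(\DG/b_{i-1})$ so that $\desc(b_r)\cap\desc(b_{r+1})=\emptyset$ for some $r$; the termination of this chain requires a careful analysis of the equivalence relations $\rho_a$ and $\sigma_a$ on the levels $\Gamma^\ell(a)$ coming from the structure theory of Section \ref{structuresection}. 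None of this is present, even in outline, in your proposal, and your guess about the role of P2 is off target: P2 is not an orbit-richness condition on pairs. It is the closure condition $\desc(a)\leq^+\Gamma$, whose job is to guarantee that intersections such as $\Gamma(a)\cap\Gamma(b)$ are finitely generated $\leq^+$-subsets, so that one can freely amalgamate over them and realise the result back inside $\DG$.

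The construction itself is also not set up correctly. You take a class of \emph{finite} digraphs with vertexwise labellings into $\Gamma$, but the amalgamation property for such a class is precisely where things break: a disjoint union ``with identifications forced by shared labels'' of two finite pieces need not extend to a digraph all of whose descendant sets are copies of $\Gamma$, and your appeal to ``the finite-to-one tree homomorphism $\Gamma\to\mathcal{T}$ supplied by Theorem \ref{mainresultA}'' to keep quotients finite is not something that theorem provides (it is a counting statement; $\mathcal{T}$ in the paper is a class of structures, not a tree). The paper instead works, as in \cite{Evans2}, with the class $\C_\Gamma$ of \emph{finitely generated infinite} digraphs $A$ satisfying $\desc(a)\leq^+ A$ and $\desc(a)\cong\Gamma$ for all $a\in A$, with $\leq^+$-embeddings; amalgamation is free amalgamation over f.g.\ $\leq^+$-subsets, countability of the relevant isomorphism types is a separate nontrivial lemma (imported from \cite{AET}), and P2 is needed already to get $\Gamma\in\C_\Gamma$. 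Your outline would need to be rebuilt on this foundation before the primitivity question can even be posed in a usable form.
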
 

The construction of $D_\Gamma$ is as in the paper \cite{Evans2}, where the descendant set $\Gamma$ is a tree. However, the proof of primitivity in the general case is much harder than in \cite{Evans2}, and this is where the novelty lies in the above result. We do not know whether the condition P2 on $\Gamma$ is a necessary condition here, but it is satisfied by all of the examples constructed in Section 5 of \cite{Amato}. So this gives new examples of descendant sets in primitive (and even highly arc transitive) digraphs of finite out-valency and infinite in-valency.  It would of course be interesting to find necessary and sufficient conditions on the descendant set in a primitive distance transitive digraph of finite out-valency and infinite in-valency. It would be even more interesting to know whether anything can be said without the assumption of distance transitivity.

\subsection{Notation and Terminology}

A digraph $(D; E(D))$ consists of a set $D$ of vertices, and a set $E(D)\subseteq D\times D$ of ordered pairs of vertices, the (directed) edges. Our digraphs will have no loops and no multiple edges. We will think of a  subset $X$ of the set $D$ of vertices as a digraph in its own right by considering the full induced subdigraph on $X$ (so $E(X) = E(D) \cap X^2$).  Throughout this paper, `subdigraph' will mean `full induced subdigraph'. Thus henceforth, we will not usually distinguish notationally between a digraph and its vertex set. In particular, we will usually refer to the digraph $(D; E(D))$ simply as `the digraph $D$'. Note that this is a different convention from the usual notation $D = (VD; ED)$. Furthermore, we will use notation such as `$\alpha \in D$' to indicate that $\alpha$ is a vertex of the digraph $D$.

We denote the automorphism group of the digraph $D$ by $\Aut(D)$. We say that $D$ is transitive (respectively, edge transitive)  if this is transitive on $D$ (respectively, $E(D)$). We say that $D$ is \textit{primitive} if $\Aut(D)$ is primitive on $D$, that is, there are no non-trivial $\Aut(D)$-invariant equivalence relations on $D$.

The \textit{out-valency} of a vertex $\alpha \in D$ is the size of the set $\{u\in D : (\alpha,u)\in E(D)\}$ of out-vertices of $\alpha$;  similarly, the \textit{in-valency} of $\alpha$ is the size of the set $\{u\in D : (u,\alpha)\in E(D)\}$ of in-vertices. 
Let $s\geq0$ be an integer. An \textit{$s$-arc} from $u$ to $v$ in $D$ is a sequence $u_{0}u_{1} \ldots u_{s}$ of $s+1$ vertices such that $u_0 = u$, $u_{s} = v$ and $(u_{i},u_{i+1})\in ED$ for $0\leq i<s$ and $u_{i-1}\neq u_{i+1}$ for $0<i<s$. Usually our digraphs will be asymmetric, in which case this last condition is redundant. We denote by $D^s(u)$ the set of vertices of $D$ which are reachable by an $s$-arc from $u$. The \textit{descendant set} $D(u)$ (or $\desc(u)$) of $u$ is $\bigcup_{s\geq 0} D^s(u)$.  Similarly the set $\anc(u)$ of \textit{ancestors} of $u$  is the set of vertices of which $u$ is a descendant.

 In particular,  fix $\alpha \in D$, and let $\Gamma = D(\alpha)$. If ${\rm Aut}(D)$ is transitive on the set of vertices of $D$, then $D(u)\cong \Gamma $ for all vertices $u$, and we shall speak of the digraph $\Gamma$ as \textit{the descendant set} of $D$.
 
 We say that the digraph $D$ is \textit{highly arc transitive} if for each $s \geq 0$, $Aut(D)$ is transitive on the set of $s$-arcs in $D$. Following \cite{Lam}, we say that a digraph $D$ is \textit{(directed)-distance transitive} if for every $s \geq 0$, $\Aut(D)$ is transitive on pairs $(u,v)$ for which there is an $s$-arc from $u$ to $v$, but no $t$-arc for $t <s$. Note that this implies vertex and edge transitivity, but is weaker than being highly arc transitive. We generally exclude the case of null digraphs, where there are no edges.

Henceforth, we shall be interested in the structure of a descendant set $\Gamma = \Gamma(\alpha)$ of a vertex $\alpha$ in some transitive digraph $D$ with finite out-valency $m$. We will be considering this as a digraph with its full induced structure from $D$. We refer to $\alpha$ as the \textit{root} of $\Gamma$ and write $\Gamma = \Gamma(\alpha)$ to indicate that any vertex of $\Gamma$ is a descendant of $\alpha$. Similarly, we write $\Gamma^i$ instead of $\Gamma^i(\alpha)$ for the set of vertices reachable by an $i$-arc starting at $\alpha$ and  if $\beta \in \Gamma(\alpha)$, then  we write $\Gamma(\beta) = \desc(\beta) \subseteq \Gamma(\alpha)$. It is clear that if $D$ is highly arc transitive, then $\Aut(\Gamma(\alpha))$ is transitive on $s$-arcs in $\Gamma(\alpha)$ which start at $\alpha$. Similarly, if $D$ is distance transitive, then $\Aut(\Gamma(\alpha))$ is transitive on $\Gamma^n(\alpha)$ for each $n \in \N$.

\section{The structure of descendant sets} \label{structuresection}

\subsection{Preliminaries}\label{prelim}
We work with digraphs $\Gamma$ having the following properties:
\begin{enumerate}
\item[\textbf{G0}]$\Gamma = \Gamma(\alpha)$ is a rooted digraph with finite out-valency $m > 0$ and $\Gamma^s(\alpha) \cap \Gamma^t(\alpha) = \varnothing$ whenever $s\neq t$.

\item[\textbf{G1}]$\Gamma(u)\cong \Gamma $ for all $u\in \Gamma$.

\item[\textbf{G2}] For $n\in \mathbb{N}$ we have $\vert \Gamma^n(\alpha) \vert < \vert \Gamma^{n+1}(\alpha)\vert$.

\item[\textbf{G3}] There is an integer $k \geq 1$ such that if $\l \geq k$ and $x \in \Gamma^\l(\alpha)$ and $z \in \Gamma(x)$, then $\anc(z) \cap \Gamma^1(\alpha) = \anc(x) \cap \Gamma^1(\alpha)$. \end{enumerate}

We shall see that conditions G0, G1, G3  hold when $\Gamma$ is the descendant set in a distance transitive digraph of finite out-valency and infinite in-valency (Corollary \ref{15}). The minimum possible $k$ in G3 is the parameter $k(\Gamma)$ which appears in Theorem \ref{mainresultA}. If $k(\Gamma) = 1$ then $\Gamma$ is a directed tree, however  Section 5 of \cite{Amato} constructs digraphs $\Gamma(\Sigma, t)$ satisfying G0 - G3 with arbitrary value for $k(\Gamma(\Sigma, t))$.

\textit{A priori} there could be continuum-many isomorphism types of digraphs with these properties. Our main result in this section (Theorem \ref{mainresult}) is that there are only countably many isomorphism types of digraph $\Gamma$ which satisfy G0, G1 and G3. To establish this, we show  that there is a natural equivalence relation $\rho$ on $\Gamma$  (refining the `layering' of $\Gamma$ given by G0) such that the quotient digraph $\Gamma/\rho$ is a directed tree. If G2 holds then this is not a directed line and the size of the layers $\Gamma^n$ grows exponentially.

\begin{lemma} \label{310} Suppose $D$ is a (non-null) digraph of finite out-valency which  has no directed cycles and is distance transitive. Then any descendant set $\Gamma(\alpha)$ in $D$ satisfies G0.\end{lemma}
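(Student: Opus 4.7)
Proof plan.

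Condition G0 contains two assertions: that $\Gamma$ is a rooted digraph with finite positive out-valency $m$, and that the layers $\Gamma^s(\alpha)$ for $s \in \N$ are pairwise disjoint. The first is immediate: distance-transitivity implies vertex-transitivity (take $s=0$ in the definition), so all vertices of $D$ share a common finite out-valency $m$; since $D$ is non-null we have $m \geq 1$, and every out-neighbour of $\alpha$ is a descendant of $\alpha$, so $\Gamma$ inherits this out-valency.

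For the disjointness I would establish the stronger identity $\Gamma^n(\alpha) = N_n(\alpha)$ for every $n \in \N$, where $N_n(\alpha) = \{v \in D : d(\alpha, v) = n\}$ is the directed-distance-$n$ sphere (with $d$ the shortest-arc length in $D$). The sets $N_n(\alpha)$ are manifestly disjoint, so this implies the claim. The inclusion $N_n(\alpha) \subseteq \Gamma^n(\alpha)$ holds because the absence of directed cycles (and so of $2$-cycles) forces any shortest directed path from $\alpha$ to $v$ to be non-backtracking, hence to qualify as an $n$-arc.

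For the reverse inclusion I plan a minimal-counterexample argument. Suppose some $v \in N_n(\alpha) \cap \Gamma^t(\alpha)$ exists with $n < t$, and choose $(n, t, v)$ with $t$ minimal; then $\Gamma^k(\alpha) = N_k(\alpha)$ for all $k < t$. Fix a $t$-arc $\alpha = u_0, u_1, \ldots, u_t = v$. Minimality forces $d(\alpha, u_i) = i$ for $i < t$; in particular $u_n \in N_n(\alpha)$. Vertex-transitivity gives $\Gamma(u_n) \cong \Gamma(\alpha)$, so the minimality of $t$ transfers to this isomorphic descendant set; applied to the segment $u_n, u_{n+1}, \ldots, u_t = v$, of length $t - n < t$, it yields $d(u_n, v) = t - n > 0$. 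Thus $u_n$ and $v$ are distinct vertices of $N_n(\alpha)$ with $u_n$ a $D$-ancestor of $v$ at directed distance $t - n$.

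The finishing step propagates this across the whole sphere. By distance-transitivity, $\Aut(D)_\alpha$ acts transitively on $N_n(\alpha)$: for every $v' \in N_n(\alpha)$ there is $\sigma \in \Aut(D)_\alpha$ with $\sigma(v) = v'$, and then $\sigma(u_n) \in N_n(\alpha)$ is a $D$-ancestor of $v'$ at distance $t - n$. Iterating produces a chain $v = v^{(0)}, v^{(1)}, v^{(2)}, \ldots$ in $N_n(\alpha)$ with each $v^{(j+1)}$ a $D$-ancestor of $v^{(j)}$ at distance $t - n$. Since $|N_n(\alpha)| \leq |\Gamma^n(\alpha)| \leq m^n$ is finite, the pigeonhole principle gives $v^{(k)} = v^{(k')}$ for some $k < k'$; concatenating the corresponding paths yields a directed cycle of positive length $(k' - k)(t - n)$ in $D$, contradicting the no-cycles hypothesis. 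The main technical obstacle is justifying that the minimality of $t$ really transfers into $\Gamma(u_n)$ so as to conclude $d(u_n, v) = t - n$; once this is clear, finiteness of $N_n(\alpha)$ (forced by the bound $|\Gamma^n(\alpha)| \leq m^n$) makes the pigeonhole closure routine.
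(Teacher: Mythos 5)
The paper does not give its own argument here: it simply cites Proposition 3.10 of Cameron--Praeger--Wormald and omits the details. Your proposal is, in effect, a correct reconstruction of that omitted argument, adapted (as it must be) from high arc transitivity to the weaker hypothesis of distance transitivity: the key substitution is that you use transitivity of $\Aut(D)_\alpha$ on the distance-$n$ sphere $N_n(\alpha)$, rather than transitivity on $n$-arcs, to propagate the ``proper ancestor at the same level'' configuration around the finite sphere and close up a directed cycle by pigeonhole. The identification $\Gamma^n(\alpha)=N_n(\alpha)$ is legitimate because the absence of directed $2$-cycles makes every directed walk an arc, so the arc-distance in the definition of distance transitivity coincides with your $d$. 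The step you flag as the technical obstacle is fine: state the minimality of $t$ as a property of $D$ itself (namely, $t$ is least such that some $w\in\Gamma^t(u)$ has $d(u,w)<t$ for some vertex $u$); vertex transitivity then makes it available at $u_n$ without any fuss about intrinsic versus ambient distances, since every directed path between a vertex and one of its descendants lies inside the descendant set. One trivial case you should dispose of separately is $n=0$: there $t-n<t$ fails, but $v=\alpha\in\Gamma^t(\alpha)$ with $t>0$ is already a closed directed walk and hence yields a directed cycle directly. With that remark added, the proof is complete and correct.
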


\begin{proof} This is the same as the proof of Proposition 3.10 in \cite{Cameronetal}, so we omit the details. 
\end{proof}

\begin{lemma} \label{nocycles} Suppose $D$ is a digraph of finite out-valency with a directed cycle and whose automorphism group is either primitive on vertices or transitive on edges. Then $D$ has finite in-valency.
\end{lemma}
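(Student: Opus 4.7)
The plan is to use the directed cycle to confine the in-neighbours of a fixed vertex $v_0$ to a finite set. First I would establish that $\Aut(D)$ is vertex transitive: under primitivity this is immediate; under edge transitivity it follows because a vertex lying on a cycle is simultaneously the source and the target of an edge, so every non-isolated vertex falls into the orbit of the cycle vertices. Consequently the in-valency $d$ is constant, and the task reduces to showing $d < \infty$. Fix a shortest directed cycle $v_0 \to v_1 \to \cdots \to v_{n-1} \to v_0$ through $v_0$, with the $v_i$ distinct and $n \geq 2$.

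For the edge-transitive case, edge transitivity makes the point-stabiliser $\Aut(D)_{v_0}$ act transitively on the in-neighbours of $v_0$, since any two in-edges $(u, v_0), (u', v_0)$ are exchanged by an automorphism which must fix $v_0$. For any in-neighbour $u$, pick $g \in \Aut(D)_{v_0}$ with $g(v_{n-1}) = u$; applying $g$ to the cycle yields the $(n{-}1)$-arc $v_0, g(v_1), \ldots, g(v_{n-2}), u$, so $u \in \Gamma^{n-1}(v_0)$. Therefore $d \leq |\Gamma^{n-1}(v_0)| \leq m^{n-1}$, which is finite.

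The primitive case is harder because $\Aut(D)_{v_0}$ need not be transitive on the whole in-neighbourhood. The initial reduction is to the $\Aut(D)$-invariant equivalence relation $u \sim v$ iff $u = v$ or both $u \in \desc(v)$ and $v \in \desc(u)$. The cycle makes $\sim$ non-trivial ($v_0 \sim v_1$), so primitivity forces $\sim$ to be universal, yielding $D = \desc(v_0) = \anc(v_0)$; by vertex transitivity the $\Aut(D)$-invariant set of vertices lying on a directed cycle of length at most $n$ is non-empty, hence equals $V(D)$. Next, decompose $E(D)$ into the $\Aut(D)$-orbitals $\Delta^{(1)}, \ldots, \Delta^{(k)}$, of which there are at most $m$ (each contributes a non-empty out-suborbit of $v_0$). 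On each orbital $\Aut(D)$ acts edge-transitively, so the cycle-transport argument of the previous paragraph, applied inside the subdigraph $(V(D), \Delta^{(i)})$, bounds the contribution of $\Delta^{(i)}$ to the in-valency at $v_0$ whenever this subdigraph contains a directed cycle. Summing the bounds over the finitely many orbitals then gives $d < \infty$.

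The step I expect to be the main obstacle is to rule out an orbital whose own subdigraph is acyclic but whose in-suborbit at $v_0$ is infinite. This is where primitivity must be invoked non-trivially: a vertex-transitive acyclic orbital subdigraph of finite out-valency carries natural ``height'' data coming from its reachability order, and the presence of a cycle in $D$ (which must necessarily mix orbitals in such a scenario) should force this height data, together with the partition of $D$ into orbit classes of some $\Aut(D)$-invariant closure, into a non-trivial proper $\Aut(D)$-invariant equivalence relation on $D$, contradicting primitivity. Executing this exclusion cleanly is the heart of the lemma, and I would rely on the corresponding argument in \cite{Cameronetal} for the technical details.
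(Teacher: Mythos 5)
Your edge-transitive case is correct and is essentially the paper's argument (one edge lies on a directed $K$-cycle, hence by edge-transitivity every edge does, hence every in-neighbour of $v_0$ lies in the finite set $D^{K-1}(v_0)$). Your primitive case also opens exactly as the paper does, with the $\Aut(D)$-invariant relation $u\sim v \Leftrightarrow u\in\desc(v)$ and $v\in\desc(u)$, made universal by primitivity. But it then has a genuine gap: the step you yourself flag as ``the main obstacle'' --- an orbital $\Delta^{(i)}$ whose subdigraph is acyclic but whose in-suborbit at $v_0$ is infinite --- is left unproved, and the sketch you offer for it (height data from the reachability order, an invariant closure, deferring to \cite{Cameronetal}) is not an argument. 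There is no such argument to borrow from \cite{Cameronetal} for this step; the lemma's proof has to close this itself.

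The gap is, however, easy to close, because the obstacle is illusory: you do not need the directed cycle through an edge of $\Delta^{(i)}$ to lie inside the subdigraph $(V(D),\Delta^{(i)})$. From the universality of $\sim$ you already have $\desc(v)=D$ for every $v$, so for any edge $(u,v)$ of $D$ there is a directed path in $D$ from $v$ back to $u$; taking a shortest such path shows that \emph{every} edge of $D$ lies on a directed cycle of $D$. Now fix an edge-orbit $\Delta^{(i)}$ meeting the in-edges of $v_0$: one of its edges lies on a directed $K_i$-cycle of $D$, and transporting that cycle by automorphisms shows every edge of $\Delta^{(i)}$ lies on a directed $K_i$-cycle of $D$. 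Hence every in-neighbour $u$ of $v_0$ with $(u,v_0)\in\Delta^{(i)}$ satisfies $u\in D^{K_i-1}(v_0)$, a finite set, and summing over the at most $m$ edge-orbits gives finite in-valency. This is precisely what the paper means by ``every edge of $D$ is contained in a cycle; we can then argue as in the first case'': the uniformity of cycle length that full edge-transitivity gave globally is recovered orbit-by-orbit, and the acyclicity or otherwise of the individual orbital subdigraphs is irrelevant.
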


\begin{proof} First, suppose that $D$ is edge-transitive. Then there is a $K$ such that every edge of $D$ is in a directed $K$-cycle. Let $\alpha \in D$. Then every in-vertex $\beta$ of $\alpha$ is in $D^{K-1}(\alpha)$. But this set is finite, as $D$ has finite out-valency.

Now suppose $D$ is vertex-primitive. Consider the relation $\sim$ on $D$ given by $u\sim v \Leftrightarrow u \in D(v) \mbox{ and } v \in D(u)$. This is an $\Aut(D)$-invariant equivalence relation on $D$ and as $D$ contains a directed cycle, its classes are not singletons. Thus, by primitivity $u \sim v$ for all $u, v \in D$. In particular, every edge of $D$ is contained in a cycle. We can then argue as in the first case.
\end{proof}

\begin{lemma} \label{23A}  Suppose $\Gamma$ satisfies G0, G1 and that for each $i \in \N$ the automorphism group $\Aut(\Gamma)$ is transitive on $\Gamma^i$. Then $\Gamma$ satisfies G3.\end{lemma}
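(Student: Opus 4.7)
My plan is to use the transitivity of $\Aut(\Gamma)$ on each layer to associate a numerical invariant to the layer, and then appeal to the finiteness of $\Gamma^1$ to show this invariant stabilises. For each $x \in \Gamma$, set $a(x) = \anc(x) \cap \Gamma^1$ and $b(x) = |a(x)|$. The hypothesis that $\Aut(\Gamma)$ acts transitively on each $\Gamma^i = \Gamma^i(\alpha)$ presupposes that the layers are setwise $\Aut(\Gamma)$-invariant, and hence every automorphism fixes the root $\alpha$ and preserves $\Gamma^1$ setwise. So $b$ is $\Aut(\Gamma)$-invariant and takes a constant value $c(i)$ on each non-empty layer $\Gamma^i$, with $c(i) \leq m = |\Gamma^1|$.

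Next I verify that $c$ is non-decreasing on the non-empty layers. Given $y \in \Gamma^{\ell+1}$, fix an $(\ell+1)$-arc $\alpha = u_0 \to u_1 \to \cdots \to u_{\ell+1} = y$. Its length-$\ell$ prefix is itself an $\ell$-arc from $\alpha$ to $u_\ell$, so $u_\ell \in \Gamma^\ell$ by G0. Since $u_\ell$ is an in-neighbour of $y$, any $\Gamma^1$-ancestor of $u_\ell$ is also an ancestor of $y$; so $a(u_\ell) \subseteq a(y)$ and $c(\ell) \leq c(\ell+1)$. Being non-decreasing and bounded above by $m$, the sequence $c$ attains a maximum value $M$; choose $k \geq 1$ large enough that $c(\ell) = M$ for all $\ell \geq k$.

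To conclude G3, fix $\ell \geq k$, $x \in \Gamma^\ell$ and $z \in \Gamma(x)$, and let $j$ be the unique layer of $z$ (which exists by G0). Because $z$ is a descendant of $x$, every ancestor of $x$ is an ancestor of $z$, so $a(x) \subseteq a(z)$; in particular
$$M = c(\ell) = |a(x)| \leq |a(z)| = c(j) \leq M,$$
forcing $|a(x)| = |a(z)|$ and, combined with the inclusion, $a(x) = a(z)$. The only delicate point is the implicit observation that automorphisms fix $\alpha$ (needed for $a(\cdot)$ to be an $\Aut(\Gamma)$-invariant); once that is granted, the monotonicity of $c$ and the concluding double-inclusion argument are straightforward consequences of G0 and the finite range of $c$.
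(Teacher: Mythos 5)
Your proof is correct and follows essentially the same route as the paper's: by layer-transitivity the quantity $\vert\anc(x)\cap\Gamma^1\vert$ depends only on the layer, is non-decreasing and bounded by $m$, hence stabilises, and at the stable value the inclusion $\anc(x)\cap\Gamma^1\subseteq\anc(z)\cap\Gamma^1$ forces equality. You merely spell out the monotonicity and the final double-inclusion step that the paper compresses into ``the result follows.''
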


\begin{proof}
For $x \in \Gamma^i$, let $t_i = \vert\anc(x) \cap \Gamma^1\vert$. By the transitivity assumption, this  depends only on $i$. As $\anc(x) \cap \Gamma^1 \subseteq \anc(z) \cap \Gamma^1$ when $z \in \Gamma(x)$, we have  $t_1 \leq t_2\leq t_3 \leq \ldots \leq m$. Choosing $k$ so that $t_k$ is as large as possible, the result follows.\end{proof}

\begin{remark} \rm Note that in the above if G2 also holds, then $t_i < m$. Otherwise, for $\beta \in \Gamma^1$ we have $\Gamma^{i-1}(\beta) = \Gamma^i(\alpha)$ and so $\vert \Gamma^{i-1} \vert = \vert \Gamma^i\vert$ (by G1), contradicting G2.
\end{remark}

\begin{corollary}\label{15}
Suppose $D$ is a distance transitive digraph of finite out-valency $m > 0$ and is either of infinite in-valency, or has no directed cycles. Then the descendant set $\Gamma$  in $D$ satisfies G0, G1, G3. If the automorphism group of $D$ is also primitive on vertices, then $m >1$ and $\Gamma$ satisfies G2.
\end{corollary}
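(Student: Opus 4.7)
The plan is to treat the two assertions separately: the conditions G0, G1, G3 will follow directly from the preceding lemmas, and the primitive half will be handled by constructing forbidden $\Aut(D)$-invariant equivalence relations on $D$.

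For G0, G1, G3 the first move is to invoke Lemma \ref{nocycles}: since distance transitivity implies edge transitivity, the presence of a directed cycle would force finite in-valency, contradicting either clause of the hypothesis. Hence $D$ is acyclic, so Lemma \ref{310} delivers G0. Property G1 is immediate from vertex transitivity of $D$. For G3, distance transitivity gives (as recorded in the terminology subsection) transitivity of $\Aut(\Gamma(\alpha))$ on each layer $\Gamma^n(\alpha)$, so Lemma \ref{23A} applies.

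For the primitive half the plan is first to rule out $m=1$. If $m=1$, let $f:D \to D$ denote the out-neighbour map and consider the relation $u \sim v \iff f^n(u) = f^n(v)$ for some $n \geq 0$. This is an $\Aut(D)$-invariant equivalence relation on $D$. If it were the universal relation, then applying it to $u$ and $f(u)$ would produce $f^n(u) = f^{n+1}(u)$, i.e.\ a loop, which is excluded; if it is non-trivial, primitivity fails. If it is trivial, then in-valency must be $\leq 1$ everywhere (any vertex of in-valency $\geq 2$ would give two distinct $\sim$-related in-neighbours), forcing in-valency to equal $1$ by vertex transitivity and existence of edges; whence $D$ is a disjoint union of bi-infinite directed lines. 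Several lines give the non-trivial invariant "same connected component" relation; a single directed line has $\Aut(D) \cong \mathbb{Z}$ acting imprimitively by translations (via cosets of $k\mathbb{Z}$). Either way primitivity is contradicted, so $m \geq 2$.

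For G2, one first notes that $|\Gamma^n|$ is non-decreasing: for $\beta \in \Gamma^1(\alpha)$ we have $\Gamma^{n-1}(\beta) \subseteq \Gamma^n(\alpha)$ and the two sides have the same cardinality as $\Gamma^{n-1}(\alpha)$ by G1. If G2 fails, pick $l$ with $|\Gamma^{l-1}| = |\Gamma^l|$; then $l \geq 2$ since $m>1$, and the inclusion $\Gamma^{l-1}(\beta) \subseteq \Gamma^l(\alpha)$ of finite sets of equal size must be equality, giving $\Gamma^{l-1}(\beta) = \Gamma^l(\alpha)$ for every child $\beta$ of $\alpha$. This rigidity lets one define $u \sim v$ on $D$ by $\Gamma^{l-1}(u) = \Gamma^{l-1}(v)$; this $\Aut(D)$-invariant equivalence relation is non-trivial (any two children of $\alpha$ are equivalent and $m \geq 2$), and not universal, for otherwise a single finite set $S = \Gamma^{l-1}(u)$ would be $\Aut(D)$-fixed while $\Aut(D)$ acts transitively on the infinite (acyclic, vertex-transitive, positive out-valency) $D$. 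This contradicts primitivity. The main obstacle is this last step: extracting enough structural information from the single numerical identity $|\Gamma^{l-1}| = |\Gamma^l|$ to describe a proper $\Aut(D)$-invariant equivalence relation on the whole of $D$ (not merely on $\Gamma$) and then checking it lies strictly between the two trivial ones.
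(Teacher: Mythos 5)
Your proof is correct, and its overall skeleton matches the paper's: G0, G1, G3 are pulled directly from Lemmas \ref{nocycles}, \ref{310}, \ref{23A} together with vertex transitivity, and both halves of the primitivity assertion are handled by exhibiting proper $\Aut(D)$-invariant equivalence relations. For G2 your relation ($u\sim v$ iff $\Gamma^{l-1}(u)=\Gamma^{l-1}(v)$) is exactly the one the paper uses implicitly; your check that it is not universal (a transitive group on an infinite set cannot fix a non-empty finite set) is a clean alternative to the paper's implicit route, which would instead observe that universality forces $s\in\Gamma^n(s)$ for $s$ in the common descendant set, contradicting acyclicity. The one place you take a genuinely different path is the exclusion of $m=1$: the paper observes that out-valency $1$ plus acyclicity forces the underlying undirected graph to be a forest, so that ``even distance'' is a non-trivial invariant equivalence relation; you instead use the eventual-coalescence relation $f^n(u)=f^n(v)$ of the out-neighbour map, ruling out universality via acyclicity and triviality via the resulting disjoint-union-of-directed-lines structure. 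Your version is longer and needs the extra case analysis (several lines versus one line with $\Aut\cong\mathbb{Z}$), but it avoids any appeal to the undirected distance metric and stays entirely inside the directed structure; both arguments are sound. The closing remark about the ``main obstacle'' is redundant, since the step it flags is already carried out in full in the preceding sentences.
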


\begin{proof}
By Lemma \ref{nocycles}, if $D$ has infinite in-valency then $D$ has no directed cycles, so by Lemma \ref{310}, $\Gamma$ satisfies G0. As $D$ has transitive automorphism group, G1 holds. Distance transitivity implies that $\Aut(\Gamma)$ is transitive on each $\Gamma^i$, so G3 holds. 

Suppose $\Aut(D)$ is primitive on vertices of $D$. If G2 does not hold for some $n$, then for $\beta, \beta' \in \Gamma^1(\alpha)$ we have $\Gamma^n(\beta) = \Gamma^{n+1}(\alpha) = \Gamma^n(\beta')$. If $m >1$, then this gives a non-trivial equivalence relation on the vertices of $D$ which is preserved by $\Aut(D)$, and we have a contradiction to primitivity. So it remains to show that $m > 1$. But if $m=1$, then the underlying (undirected) graph of $D$ has no cycles. This contradicts primitivity of $\Aut(D)$, as it implies that being at even distance in the underlying graph is an equivalence relation on the vertices. 
\end{proof}

\subsection{Structure theory}

Throughout this section we assume that $\Gamma$ satisfies G0, G1, G3. We let $k$ be an  integer satisfying the condition in G3. The proofs in this section are all adapted from \cite{Amato}.
\medskip

\begin{lemma}\label{lemma1}
Suppose $n$ is a non-negative integer, $\beta \in \Gamma^n(\alpha)$, $\l \geq k$, $x \in \Gamma^{n+\l}(\alpha)$ and $z \in \Gamma(x) \cap \Gamma(\beta)$. Then $x \in \Gamma^{\ell}(\beta)$. 
\end{lemma}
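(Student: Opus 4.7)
The plan is to induct on $n$, the layer of $\beta$. The base case $n = 0$ is immediate from G0: $\beta = \alpha$, so $\Gamma(\beta) = \Gamma$ and the hypothesis $x \in \Gamma^{n+\ell}(\alpha) = \Gamma^\ell(\alpha)$ gives $x \in \Gamma^\ell(\beta)$.

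For the inductive step ($n \geq 1$), the key move is to produce a vertex $\beta_1 \in \Gamma^1(\alpha)$ that is simultaneously an ancestor of $\beta$ and of $x$; this will let me pass from $\Gamma$ to the smaller rooted digraph $\Gamma(\beta_1)$, in which $\beta$ sits one layer closer to the root. I choose $\beta_1 \in \Gamma^1(\alpha) \cap \anc(\beta)$, for instance the first vertex after $\alpha$ on some path $\alpha \to \beta$ (when $n=1$, just $\beta_1 = \beta$). Transitivity of ancestry together with $z \in \Gamma(\beta)$ gives $\beta_1 \in \anc(z) \cap \Gamma^1(\alpha)$. Since $x \in \Gamma^{n+\ell}(\alpha)$ with $n+\ell \geq \ell \geq k$, G3 applied to $x$ and $z$ yields $\anc(z) \cap \Gamma^1(\alpha) = \anc(x) \cap \Gamma^1(\alpha)$, whence $\beta_1 \in \anc(x)$, i.e.\ $x \in \Gamma(\beta_1)$. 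By G0 each vertex sits in a unique layer from any of its ancestors, so $\beta \in \Gamma^{n-1}(\beta_1)$ and $x \in \Gamma^{(n-1)+\ell}(\beta_1)$.

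To close the induction, I invoke G1: $\Gamma(\beta_1) \cong \Gamma$, so $\Gamma(\beta_1)$ itself satisfies G0, G1, G3 with the same constant $k$. Applying the inductive hypothesis inside the rooted digraph $\Gamma(\beta_1)$ --- with root $\beta_1$ replacing $\alpha$, $\beta$ at layer $n-1$, $x$ at layer $(n-1)+\ell$, and the same $z \in \Gamma(x) \cap \Gamma(\beta)$ --- delivers $x \in \Gamma^\ell(\beta)$, which is the desired conclusion.

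The only step doing substantive work is the application of G3: it is exactly what certifies that the layer-1 ancestor through which we reach $z$ via $\beta$ is automatically an ancestor of $x$, so that the argument can be recentered at $\beta_1$. Everything else (existence of $\beta_1$, the layer bookkeeping via G0, and the transfer of the hypotheses to $\Gamma(\beta_1)$ via G1) is routine once this point has been isolated.
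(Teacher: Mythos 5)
Your proof is correct and uses essentially the same induction on $n$ as the paper, with G3 supplying the key fact that a first-layer ancestor of $z$ is automatically an ancestor of $x$, and G1 licensing the recentering at a descendant vertex. The only difference is organizational: the paper applies the inductive hypothesis to an ancestor $\gamma \in \Gamma^{n-1}(\alpha)$ of $\beta$ (placing $x \in \Gamma^{\ell+1}(\gamma)$) and then invokes G3 inside $\Gamma(\gamma)$ to conclude $\beta \in {\rm anc}(x)$, whereas you invoke G3 in $\Gamma(\alpha)$ first to get $x \in \Gamma(\beta_1)$ for $\beta_1 \in \Gamma^1(\alpha)$ and then apply the inductive hypothesis inside $\Gamma(\beta_1)$.
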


\begin{proof}
This is by induction on $n$. The case $n = 0$ is trivial as then $\beta = \alpha$. In general let $\gamma \in \Gamma^{n-1}(\alpha)$ be an ancestor of $\beta$. By induction hypothesis, $x \in \Gamma^{\l+1}(\gamma)$. Now work with $\Gamma(\gamma) \cong \Gamma$ (by G1). As $\l \geq k$ and $z \in \Gamma(x)$ we have $\anc(z) \cap \Gamma^1(\gamma) = \anc(x) \cap \Gamma^1(\gamma)$ (by G3 in $\Gamma(\gamma)$). So $\beta \in \anc(x)$, that is $x \in \Gamma(\beta)$. As $\beta \in \Gamma^n(\alpha)$ and $x \in \Gamma^{n+\ell}(\alpha)$, it follows from G0 that $x \in \Gamma^\ell(\beta)$, as required.
\end{proof}
\begin{definition}\rm 
\begin{enumerate}
\item Suppose $\beta \in \Gamma$,  $x \in\Gamma^n(\beta)$ and $s \leq n$. Define 
\[ \Gamma_\beta^{-s}(x) = \{ w \in \Gamma^{n-s}(\beta): x \in \Gamma(w)\}.\]
\item For $\l \geq k$ and $x, y \in \Gamma^\l(\alpha)$ write $\rho(x,y)$ iff 
\[\Gamma_\alpha^{-k+1}(x) = \Gamma_\alpha^{-k+1}(y).\]
(Say that $\rho(x,y)$ does not hold in all other cases.)
\end{enumerate}
\end{definition}

So for $x, y \in \Gamma^\l(\alpha)$ we have that $\rho(x,y)$ holds iff $x,y$ have the same ancestors in $\Gamma^{\l-k+1}(\alpha)$. Clearly $\rho$ is an $\Aut(\Gamma)$-invariant equivalence relation on $\bigcup_{\l \geq k} \Gamma^\l$.

\begin{lemma}\label{lemma2}
Suppose $\l \geq k$ and $x, y \in \Gamma^\l(\alpha)$. If $\Gamma(x)\cap \Gamma(y) \neq \varnothing$, then $\rho(x,y)$ holds.
\end{lemma}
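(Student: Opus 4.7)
The plan is to fix an arbitrary $\beta \in \Gamma_\alpha^{-k+1}(x)$ (that is, an ancestor of $x$ lying in $\Gamma^{\l-k+1}(\alpha)$) and show $\beta \in \anc(y)$; by symmetry in $x$ and $y$ this yields $\Gamma_\alpha^{-k+1}(x) = \Gamma_\alpha^{-k+1}(y)$. The key idea is to descend from $\alpha$ to $\beta$ along a directed path and check inductively, one step at a time, that each intermediate vertex is an ancestor of $y$, using G3 as the engine at each step.

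Concretely, fix a directed path $\alpha = \gamma_0, \gamma_1, \ldots, \gamma_{\l-k+1} = \beta$ with $\gamma_i \in \Gamma^i(\alpha)$ (such a path exists by G0 since $\beta$ is a descendant of $\alpha$ at level $\l-k+1$), and let $z \in \Gamma(x) \cap \Gamma(y)$. I claim each $\gamma_i \in \anc(y)$, by induction on $i$. The base case $i=0$ is trivial. For the inductive step, assume $\gamma_{i-1} \in \anc(y)$; since $\gamma_{i-1}$ is also an ancestor of $x$ (being on the chosen path to $\beta \in \anc(x)$), both $x$, $y$, and hence $z$, lie in $\Gamma(\gamma_{i-1})$. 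By G1, $\Gamma(\gamma_{i-1}) \cong \Gamma$, and by G0 the vertices $x, y$ sit in $\Gamma^{\l-i+1}(\gamma_{i-1})$ with $z$ a common descendant. Since $i \leq \l-k+1$, we have $\l-i+1 \geq k$, so G3 applies inside $\Gamma(\gamma_{i-1})$ to both $x \to z$ and $y \to z$, giving
\[ \anc_{\Gamma(\gamma_{i-1})}(x) \cap \Gamma^1(\gamma_{i-1}) = \anc_{\Gamma(\gamma_{i-1})}(z) \cap \Gamma^1(\gamma_{i-1}) = \anc_{\Gamma(\gamma_{i-1})}(y) \cap \Gamma^1(\gamma_{i-1}). \]
The vertex $\gamma_i$ lies in the left-hand set (it is the next step of our chosen path, hence an out-neighbour of $\gamma_{i-1}$ and an ancestor of $x$), so it also lies in the right-hand set, meaning $\gamma_i \in \anc(y)$. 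Taking $i = \l-k+1$ gives $\beta \in \anc(y)$, as desired.

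The only delicate point is the level bookkeeping: G3 requires the ``$x$'' vertex in its statement to be at level at least $k$ inside the ambient rooted digraph, so at the $i$-th iteration we need $\l-i+1 \geq k$. This is exactly what forces the iteration to stop at $i = \l-k+1 = $ level of $\beta$, and conversely is precisely why $\rho$ compares ancestors at level $\l-k+1$ rather than at a deeper level. Apart from this, everything reduces to repeatedly invoking G3 in the canonical copies of $\Gamma$ provided by G1, so no genuine obstacle arises.
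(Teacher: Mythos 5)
Your proof is correct, and it is organised differently from the paper's. The paper reduces Lemma \ref{lemma2} to its base case $\ell = k$: writing $\ell = n+k$, it collects the set $B$ of ancestors of the common descendant $z$ at level $n$, invokes Lemma \ref{lemma1} to place $x$ and $y$ in $\Gamma^k(\beta)$ for each $\beta \in B$, applies the $\ell=k$ case (which is just G3) inside each $\Gamma(\beta)$, and then takes the union over $B$ to identify $\Gamma_\alpha^{-k+1}(x)$ with $\Gamma_\alpha^{-k+1}(y)$. You instead fix one ancestor $\beta$ of $x$ at level $\ell-k+1$ and descend along a path $\alpha = \gamma_0, \ldots, \gamma_{\ell-k+1}=\beta$, applying G3 twice at each step --- once to the pair $(x,z)$ and once to $(y,z)$ inside $\Gamma(\gamma_{i-1})$ --- to push membership in $\anc(y)$ down one level at a time; symmetry then gives the equality of ancestor sets. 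Your argument is self-contained in that it never cites Lemma \ref{lemma1}; in effect your level-by-level induction replays the induction by which that lemma is proved, at the cost of redoing the level bookkeeping (via G0 and the transitivity of ancestry) at every step, whereas the paper's version isolates that bookkeeping once and reuses it. Both arguments run on the same engine: transporting G3 into the descendant sets $\Gamma(\gamma)$ of ancestors via G1, with G0 pinning down the levels, and your identification of $i = \ell-k+1$ as the forced stopping point correctly explains why $\rho$ compares ancestors at exactly that depth.
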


\begin{proof}
Note that the result holds for $\l = k$ by G3.

Suppose $\l = n+k$ with $n \geq 1$ and  that $z \in \Gamma(x) \cap \Gamma(y)$. Let $B = \{ \beta\in \Gamma^n(\alpha) : z \in \Gamma(\beta)\}$. If $\beta \in B$, then by Lemma \ref{lemma1}, $x,y \in \Gamma^k(\beta)$. Thus (by the case $\l = k$ in $\Gamma(\beta)$) we have $\anc(x) \cap\Gamma^1(\beta) = \anc(y)\cap \Gamma^1(\beta)$. But $\Gamma_\alpha^{-k+1}(x), \Gamma_\alpha^{-k+1}(y) \subseteq \bigcup_{\beta\in B} \Gamma^1(\beta)$. Thus $\Gamma_\alpha^{-k+1}(x) =  \Gamma_\alpha^{-k+1}(y)$, so $\rho(x,y)$.
\end{proof}

For $\l \geq k$ and $x \in \Gamma^l(\alpha)$ we write $[x]_\rho$ for the $\rho$-equivalence class containing $x$. We use notation such as $\v$, $\w$ etc. for such classes and write $\Gamma(\u) = \bigcup_{x \in \u} \Gamma(x)$ and $\Gamma^s(\u) = \bigcup_{x \in \u} \Gamma^s(x)$.

\begin{lemma}\label{lemma4}
Suppose $\l \geq k$ and $\v \subseteq \Gamma^\l(\alpha)$ is a $\rho$-class. Let $w \in \Gamma(\v)$. Then $[w]_\rho \subseteq \Gamma(\v)$.
\end{lemma}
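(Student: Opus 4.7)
The plan is to induct on the unique integer $n \geq 0$ with $w \in \Gamma^n(x)$ for some $x \in \v$ (such $x,n$ exist because $w \in \Gamma(\v)$, and $n$ is well-defined by G0). The base case $n=0$ is immediate: $w = x \in \v$, so $[w]_\rho = \v \subseteq \Gamma(\v)$.

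For the inductive step with $n \geq 1$ and the statement known for $n-1$, I fix an in-vertex $u$ of $w$ lying on a directed $n$-arc from $x$ to $w$; then $u \in \Gamma^{n-1}(x) \cap \Gamma^{\l+n-1}(\alpha) \subseteq \Gamma(\v)$, and the induction hypothesis gives $[u]_\rho \subseteq \Gamma(\v)$. Given any $w' \in [w]_\rho$, I take an arbitrary in-vertex $u'$ of $w'$ in $\Gamma$ (which exists because $w' \neq \alpha$). The whole step reduces to showing $\rho(u,u')$: once granted, $u' \in [u]_\rho \subseteq \Gamma(\v)$, so $u' \in \Gamma(y)$ for some $y \in \v$, and hence $w' \in \Gamma^1(u') \subseteq \Gamma(y) \subseteq \Gamma(\v)$.

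To prove $\rho(u,u')$, I compare $B := \Gamma_\alpha^{-k+1}(u)$ with $B' := \Gamma_\alpha^{-k+1}(u')$, both sitting at level $\l+n-k$, and aim for $B=B'$. Since all in-vertices of $w$ lie at level $\l+n-1 \geq k$ and share $w$ as a common descendant, Lemma \ref{lemma2} makes them pairwise $\rho$-equivalent, so $B$ does not depend on the choice of in-vertex of $w$; the analogous statement holds for $B'$ and $w'$. I then identify $B$ with $Q := \anc(w) \cap \Gamma^{\l+n-k}(\alpha)$: the inclusion $B \subseteq Q$ is formal, and conversely, given $q \in Q$, a directed $k$-arc from $q$ to $w$ contains a vertex $u_0$ at level $\l+n-1$ that is an in-vertex of $w$ with $q \in \anc(u_0)$, so $q \in \Gamma_\alpha^{-k+1}(u_0) = B$. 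The same reasoning yields $B' = Q' := \anc(w') \cap \Gamma^{\l+n-k}(\alpha)$.

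Finally, the hypothesis $\rho(w,w')$ reads $\Gamma_\alpha^{-k+1}(w) = \Gamma_\alpha^{-k+1}(w') =: P \subseteq \Gamma^{\l+n-k+1}(\alpha)$, and I verify that both $Q$ and $Q'$ coincide with $\bigcup_{p \in P}\{\,\text{in-vertices of } p\,\}$: the immediate out-neighbour of any $q \in Q$ on a path to $w$ lies in $P$, while any in-vertex of some $p \in P$ extends $p$'s path down to $w$ (respectively to $w'$), exhibiting it in $Q$ (respectively $Q'$). Hence $Q = Q'$, so $B = B'$, giving $\rho(u,u')$ as required. The main obstacle I anticipate is recognising that the ``in-vertex profile'' $B$ is genuinely a $\rho$-invariant of $w$ rather than merely of the choice of in-vertex $u$; this is exactly what Lemma \ref{lemma2} supplies when applied at level $\l+n-1$.
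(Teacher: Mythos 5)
Your proof is correct and follows essentially the same route as the paper's: both arguments reduce to passing from a $\rho$-class one level down to its in-vertex sets, use Lemma \ref{lemma2} to see that all in-vertices of $w$ are $\rho$-equivalent, and then identify $\Gamma_\alpha^{-k+1}$ of an in-vertex with the in-vertex set of $\Gamma_\alpha^{-k+1}(w)$ (the paper's ``taking ancestors one level back''). The only difference is presentational: you make explicit the induction on the level above $\v$ that the paper leaves implicit in ``it suffices to prove this when $w \in \Gamma^{\l+1}(\alpha)$''.
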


\begin{proof}
It suffices to prove this when $w \in \Gamma^{\l+1}(\alpha)$. So suppose that $(v,w), (v', w')$ are directed edges and $\rho(w,w')$ holds. We need to show that $\rho(v,v')$ holds. Let $A = \Gamma_\alpha^{-1}(w)$ and $A' = \Gamma_\alpha^{-1}(w')$. By Lemma \ref{lemma2}, $A \subseteq [v]_\rho$ and $A' \subseteq [v']_\rho$. By definition,
$\Gamma_\alpha^{-k+1}(w) = \bigcup_{a \in A}\Gamma_\alpha^{-k+2}(a)$ and $\Gamma_\alpha^{-k+1}(w') = \bigcup_{a' \in A'}\Gamma_\alpha^{-k+2}(a')$. So  $\bigcup_{a \in A}\Gamma_\alpha^{-k+2}(a) = \bigcup_{a' \in A'}\Gamma_\alpha^{-k+2}(a')$, as $\rho(w,w')$ holds. It follows (by taking ancestors one level back) that $\bigcup_{a \in A}\Gamma_\alpha^{-k+1}(a) = \bigcup_{a' \in A'}\Gamma_\alpha^{-k+1}(a')$. But as $A \subseteq [v]_\rho$, the left hand side is equal to $\Gamma_\alpha^{-k+1}(v)$ and similarly the right hand side is equal to  $\Gamma_\alpha^{-k+1}(v')$. Thus $\rho(v,v')$ holds.
\end{proof}

\begin{corollary} \label{lemma65} Suppose $\l \geq k$ and $v \in \Gamma^\l(\alpha)$. Let $\v$ be the $\rho$-class containing $v$. Then the quotient digraph $\Gamma(\v)/\rho$ is a  rooted directed tree with finite out-valencies.\end{corollary}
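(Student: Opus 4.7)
The plan is to verify three things about $\Gamma(\v)/\rho$: (a) it is stratified by levels with $\v$ as the unique root, (b) each class has finitely many out-neighbours, and (c) every non-root class has a unique in-neighbour, which together with (a) gives the tree property.

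For (a), by G0 each element of $\Gamma(\v)$ lies in a unique $\Gamma^{\ell+n}(\alpha)$ with $n \geq 0$, and I assign this $n$ as the level of the $\rho$-class containing it. The only class at level $0$ is $\v$. Since every edge of $\Gamma$ goes from $\Gamma^s(\alpha)$ to $\Gamma^{s+1}(\alpha)$, every edge in the quotient goes from level $n$ to level $n+1$, so there are no directed cycles. For any $u \in \u$ at level $n$, there is a $v \in \v$ with $u \in \Gamma(v)$, giving a directed $\Gamma$-path from $v$ to $u$, which projects to a directed path from $\v$ to $\u$; so the quotient is connected to the root.

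For (b), iterating the out-valency bound in $\Gamma$ gives $\vert\Gamma^{\ell+n}(\alpha)\vert \leq m^{\ell+n} < \infty$, so each $\rho$-class $\u$ is finite. Every out-neighbour of $\u$ in the quotient equals $[w]_\rho$ for some $w \in \Gamma^1(\u)$, and $\vert\Gamma^1(\u)\vert \leq m\vert\u\vert$, so $\u$ has finitely many out-neighbours.

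For (c), let $\u$ be at level $n \geq 1$; existence of an in-neighbour follows from (a). For uniqueness, suppose $(w_1, u_1)$ and $(w_2, u_2)$ are edges in $\Gamma$ with $u_1, u_2 \in \u$ and $w_1, w_2 \in \Gamma(\v)$ at $\alpha$-level $\ell+n-1 \geq k$. If $u_1 = u_2$, then $u_1 \in \Gamma(w_1) \cap \Gamma(w_2)$ and Lemma \ref{lemma2} gives $\rho(w_1, w_2)$. If $u_1 \neq u_2$, then $\rho(u_1, u_2)$ holds, and the implication ``$\rho(w, w') \Rightarrow \rho(v, v')$ whenever $(v, w)$ and $(v', w')$ are directed edges'' is precisely what the proof of Lemma \ref{lemma4} establishes, yielding $\rho(w_1, w_2)$. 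The main obstacle is this second case, but it is resolved for free by observing that Lemma \ref{lemma4} already contains the required argument. Combined with the absence of directed cycles and connectedness from (a) and the finite out-valencies from (b), this makes $\Gamma(\v)/\rho$ a rooted directed tree with finite out-valencies.
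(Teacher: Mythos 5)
Your argument is correct and takes essentially the same route as the paper, whose entire proof is the one-line assertion that the corollary follows from Lemmas \ref{lemma2} and \ref{lemma4}: you have simply filled in the details, using Lemma \ref{lemma2} when the two in-edges share a head and the edge-implication established inside the proof of Lemma \ref{lemma4} when they do not. If you prefer to cite only the \emph{statements} of the lemmas, note that in the second case Lemma \ref{lemma4} gives $[u_1]_\rho \subseteq \Gamma([w_1]_\rho)$, so $u_2 \in \Gamma(w_1')$ for some $w_1'$ with $\rho(w_1',w_1)$, and then Lemma \ref{lemma2} applied to $w_1', w_2$ yields $\rho(w_1,w_2)$ directly.
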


\begin{proof}
The statement follows from Lemmas \ref{lemma2} and \ref{lemma4}. 
\end{proof}

Note that for $\beta \in \Gamma(\alpha)$ we can consider the equivalence relation $\rho$ computed in both $\Gamma(\alpha)$ and $\Gamma(\beta)$, where in the latter we only consider ancestors in $\Gamma(\beta)$ when defining $\rho$: \textit{a priori} this gives a coarser relation.

\begin{lemma}\label{lemma3}
Suppose $\beta \in \Gamma^n(\alpha)$ and $x \in \Gamma^\l(\beta)$ with $\l \geq 2k-1$. Then the $\rho$-class containing $x$ is the same whether it is computed in $\Gamma(\alpha)$ or $\Gamma(\beta)$.
\end{lemma}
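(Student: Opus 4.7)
The plan is to reduce the statement about equivalence classes to the single assertion that for any $x\in\Gamma^\ell(\beta)$ with $\ell\geq 2k-1$ one has the equality of ancestor sets
\[ \Gamma_\alpha^{-k+1}(x)=\Gamma_\beta^{-k+1}(x). \]
Once this is established, the agreement of the two $\rho$-classes follows formally, since the $\rho$-class of $x$ is determined by $\Gamma_\alpha^{-k+1}(x)$ (respectively $\Gamma_\beta^{-k+1}(x)$).

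To prove the ancestor-set equality, I would handle the two inclusions separately. The inclusion $\Gamma_\beta^{-k+1}(x)\supseteq$ is trivial in the sense that if $w\in\Gamma^{\ell-k+1}(\beta)$ is an ancestor of $x$ in $\Gamma(\beta)$, then concatenating an $n$-arc $\alpha\to\beta$ with a suitable arc $\beta\to w$ puts $w$ in $\Gamma^{n+\ell-k+1}(\alpha)$ (using G0 to pin down the layer), so $w\in\Gamma_\alpha^{-k+1}(x)$. The nontrivial inclusion is the other one and this is where Lemma \ref{lemma1} does the work: given $w\in\Gamma_\alpha^{-k+1}(x)$, the vertex $x$ is a common descendant of $\beta\in\Gamma^n(\alpha)$ and $w\in\Gamma^{n+(\ell-k+1)}(\alpha)$. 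Applying Lemma \ref{lemma1} with the role of its "$\ell$" played by $\ell-k+1$ — which is $\geq k$ exactly because we assumed $\ell\geq 2k-1$ — forces $w\in\Gamma^{\ell-k+1}(\beta)$, i.e.\ $w\in\Gamma_\beta^{-k+1}(x)$.

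Now for the deduction about $\rho$-classes, suppose first that $y$ is $\rho$-equivalent to $x$ as computed in $\Gamma(\alpha)$. Then $y\in\Gamma^{n+\ell}(\alpha)$ and $\Gamma_\alpha^{-k+1}(y)=\Gamma_\alpha^{-k+1}(x)$; by the already-established equality the latter set lies in $\Gamma(\beta)$, so $y$ has an ancestor in $\Gamma(\beta)$ and hence $y\in\Gamma(\beta)$, and G0 pins it down to $\Gamma^\ell(\beta)$. Applying the ancestor-set equality to $y$ as well then gives $\Gamma_\beta^{-k+1}(y)=\Gamma_\beta^{-k+1}(x)$, so $y$ is $\rho$-equivalent to $x$ in $\Gamma(\beta)$. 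The reverse direction is easier since membership in $\Gamma^{n+\ell}(\alpha)$ is immediate from $y\in\Gamma^\ell(\beta)$, and the two chains of equalities $\Gamma_\alpha^{-k+1}(y)=\Gamma_\beta^{-k+1}(y)=\Gamma_\beta^{-k+1}(x)=\Gamma_\alpha^{-k+1}(x)$ close the argument.

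The main obstacle, and the only place the hypothesis $\ell\geq 2k-1$ is used, is the application of Lemma \ref{lemma1} to show that ancestors of $x$ at the relevant depth from $\alpha$ necessarily pass through $\beta$; the remaining bookkeeping is just G0 (to move between layers counted from $\alpha$ and from $\beta$) together with G1 (which is implicit in applying Lemma \ref{lemma1} inside $\Gamma(\beta)$).
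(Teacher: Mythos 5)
Your proposal is correct and follows essentially the same route as the paper's proof: both reduce the statement to the single equality $\Gamma_\alpha^{-k+1}(x)=\Gamma_\beta^{-k+1}(x)$, note that one inclusion is immediate, and obtain the other by applying Lemma \ref{lemma1} to an ancestor $w\in\Gamma^{n+\ell-k+1}(\alpha)$ of $x$, using $\ell-k+1\geq k$. Your bookkeeping for why the $\rho$-class computed in $\Gamma(\alpha)$ lands inside $\Gamma^\ell(\beta)$ is a minor rearrangement of the paper's opening observation and is equally valid.
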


\begin{proof} Note that $x \in \Gamma^{n+\l}(\alpha)$. First observe that if $y \in [x]_\rho$ (computed in $\Gamma(\alpha)$) then $x, y$ have the same ancestors in $\Gamma^{n+\l-k+1}(\alpha)$ and so also in $\Gamma^{n}(\alpha)$: in particular $y \in \Gamma(\beta)$. So to prove the statement, it suffices to show that $\Gamma_\beta^{-k+1}(x) = \Gamma_\alpha^{-k+1}(x)$. It is clear from the definition  that $\Gamma_\beta^{-k+1}(x) \subseteq \Gamma_\alpha^{-k+1}(x)$. Conversely, suppose $w \in \Gamma_\alpha^{-k+1}(x)$. Then $w \in \Gamma^{n+\l-k+1}(\alpha)$ and by assumption $n+\l -k+1 \geq n+k$. So by Lemma \ref{lemma1} we have $w \in \Gamma(\beta)$ and therefore $w \in \Gamma_\beta^{-k+1}(x)$. 
\end{proof}

Let $\l \geq 2k-1$ and let $\v$ be a $\rho$-class in $\Gamma^\l(\alpha)$. Let $T(\v)$ be the structure consisting of the induced digraph on  $\Gamma(\v)$  together with the equivalence relation induced by $\rho$ (coming from $\Gamma(\alpha)$). Recall that by  Lemma \ref{lemma4}, $T(\v)$ is a union of $\rho$-classes in $\Gamma(\alpha)$.  If $\w$ is another $\rho$-class (in $\bigcup_{\l \geq 2k-1} \Gamma^l(\alpha)$) then by a \textit{$\rho$-isomorphism} between $T(\v)$ and $T(\w)$ we mean a digraph isomorphism which respects $\rho$.

\begin{corollary}\label{lemma5}
Suppose $\v$ is a $\rho$-class in $\Gamma^\l(\alpha)$ with $l \geq 2k-1$. Then there is a $\rho$-class $\w$ in $\Gamma^{2k-1}(\alpha)$ and a $\rho$-isomorphism from $T(\w)$ to $T(\v)$.
\end{corollary}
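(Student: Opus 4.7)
The plan is to exploit the self-similarity provided by G1 to move $\v$ onto the shallowest allowed layer $\Gamma^{2k-1}(\alpha)$, and then use Lemma \ref{lemma3} to certify that the $\rho$-structure is transported faithfully. The threshold $2k-1$ in the statement is chosen precisely so that this final step goes through.

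Concretely, I would begin by picking any $v \in \v$ and following a directed path from $\alpha$ to $v$ back by $\l-(2k-1)$ steps, producing an ancestor $\beta \in \Gamma^{\l-(2k-1)}(\alpha)$ with $v \in \Gamma^{2k-1}(\beta)$. By G1 there is a rooted digraph isomorphism $\phi : \Gamma(\alpha) \to \Gamma(\beta)$ carrying $\alpha$ to $\beta$. Lemma \ref{lemma3} applied at $v$ shows that the $\rho$-class of $v$ computed inside $\Gamma(\beta)$ is still $\v$; in particular $\v \subseteq \Gamma(\beta)$, so $\phi^{-1}$ can be applied to $\v$.

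Define $\w := \phi^{-1}(\v)$. Because $\phi^{-1}$ is a rooted digraph isomorphism it preserves depth from the root, so $\w \subseteq \Gamma^{2k-1}(\alpha)$. Moreover $\rho$ is defined purely from the rooted digraph structure (agreement of ancestors in a fixed back-layer), so $\phi^{-1}$ takes $\rho$-classes of $\Gamma(\beta)$ to $\rho$-classes of $\Gamma(\alpha)$. Hence $\w$ is a genuine $\rho$-class of $\Gamma^{2k-1}(\alpha)$, and $\phi$ restricts to a digraph isomorphism $\Gamma(\w) \to \Gamma(\v)$ that intertwines the $\rho$-relation computed in $\Gamma(\alpha)$ on the left with the one computed in $\Gamma(\beta)$ on the right.

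The main and essentially only obstacle is that the statement asks for a $\rho$-isomorphism $T(\w) \to T(\v)$ where both sides carry the $\rho$-relation inherited from $\Gamma(\alpha)$, whereas the map $\phi$ naturally intertwines the $\rho$-relation of $\Gamma(\alpha)$ with that of $\Gamma(\beta)$. I would dispatch this by a second application of Lemma \ref{lemma3}: every element of $\Gamma(\v)$ lies in $\Gamma^j(\beta)$ for some $j \geq 2k-1$, so its $\rho$-class in $\Gamma(\beta)$ agrees with its $\rho$-class in $\Gamma(\alpha)$. This is exactly why the threshold in the statement is $2k-1$ rather than $k$. With this identification, $\phi|_{\Gamma(\w)}$ is the required $\rho$-isomorphism from $T(\w)$ to $T(\v)$.
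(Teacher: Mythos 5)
Your proof is correct and follows essentially the same route as the paper's: choose an ancestor $\beta \in \Gamma^{\l-2k+1}(\alpha)$ of some $v \in \v$, use Lemma \ref{lemma3} to see that $\v \subseteq \Gamma(\beta)$ and that the $\rho$-structure on $T(\v)$ agrees whether computed in $\Gamma(\alpha)$ or $\Gamma(\beta)$, and transport via the G1 isomorphism $\Gamma(\alpha) \cong \Gamma(\beta)$. Your write-up is in fact slightly more careful than the paper's in spelling out why $\w = \phi^{-1}(\v)$ is itself a $\rho$-class and why the two computations of $\rho$ can be identified.
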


\begin{proof}
Let $v \in \v$ and let $\beta \in \Gamma^{\l-2k+1}(\alpha)$ be an ancestor of $v$. So $v \in \Gamma^{2k-1}(\beta)$ and by Lemma \ref{lemma3} it follows that $\v \subseteq \Gamma(\beta)$. So $T(\v) \subseteq \Gamma(\beta)$ and the $\rho$-structure on $T(\v)$ is the same whether it is computed in $\Gamma(\alpha)$ or $\Gamma(\beta)$. By G1 there is a digraph isomorphism from $\Gamma(\alpha)$ to $\Gamma(\beta)$, and this induces a $\rho$-isomorphism between $T(\w)$, for some $\rho$-class $\w \subseteq \Gamma^{2k-1}(\alpha)$, and $T(\v) \subseteq \Gamma^{2k-1}(\beta)$, as required.
\end{proof}

Thus to any digraph $\Gamma$ satisfying G0, G1, G3, there are associated a  finite number of $\rho$-isomorphism types of $T(\v)$. In particular, we can refine Corollary \ref{lemma65} to:

\begin{corollary} \label{lemma652} Suppose $\l \geq 2k-1$ and $\v \subseteq \Gamma^\l(\alpha)$ is a $\rho$-class. Then the quotient digraph $T(\v)/\rho$ is a  rooted directed tree with a finite number of out-valencies.\hfill$\Box$\end{corollary}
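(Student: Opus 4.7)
My plan is to derive this corollary from two facts already in hand: Corollary \ref{lemma65}, which guarantees that for any $\rho$-class at a level $\geq k$ the corresponding quotient digraph is a rooted directed tree with finite out-valency at every vertex, and Corollary \ref{lemma5}, which says that up to $\rho$-isomorphism the structure $T(\w)$ attached to a $\rho$-class at any level $\l' \geq 2k-1$ coincides with $T(\w')$ for some $\rho$-class $\w'$ in $\Gamma^{2k-1}(\alpha)$. The underlying digraph of $T(\v)$ is, by definition, the induced digraph on $\Gamma(\v)$, so as digraphs $T(\v)/\rho$ and $\Gamma(\v)/\rho$ coincide. Since $\l \geq 2k-1 \geq k$, Corollary \ref{lemma65} immediately gives that $T(\v)/\rho$ is a rooted directed tree (with root $\v$) in which every vertex has finite out-valency.

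The substantive new content is the remaining clause: that only finitely many values appear as out-valencies in $T(\v)/\rho$. I would establish this by identifying the out-valency of an arbitrary vertex of $T(\v)/\rho$. Such a vertex is a $\rho$-class $\w$ lying in $\Gamma^{\l'}(\alpha)$ for some $\l' \geq \l \geq 2k-1$. By Lemma \ref{lemma4}, for any $x \in \w$ every $\rho$-class meeting $\Gamma^1(x)$ is already contained in $T(\v)$, so the number of children of $\w$ in $T(\v)/\rho$ is exactly the number of $\rho$-classes at the first level of $T(\w)$, i.e.\ the root out-valency of $T(\w)/\rho$.

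Corollary \ref{lemma5} applied to $\w$ then supplies a $\rho$-class $\w' \subseteq \Gamma^{2k-1}(\alpha)$ together with a $\rho$-isomorphism $T(\w') \to T(\w)$; in particular, the root out-valencies of $T(\w')/\rho$ and of $T(\w)/\rho$ agree. Because $\Gamma$ has finite out-valency, the set $\Gamma^{2k-1}(\alpha)$ is finite, so there are only finitely many candidate classes $\w'$, hence only finitely many possible root out-valencies among the $T(\w')/\rho$. Consequently the set of out-valencies occurring in $T(\v)/\rho$ is finite, as claimed.

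There is no serious obstacle here once the right bookkeeping is in place: the crucial observation is simply that the out-valency at a vertex of $T(\v)/\rho$ is a $\rho$-isomorphism invariant of the subtree hanging from that vertex, so finiteness can be pulled back to the bounded level $2k-1$ via Corollary \ref{lemma5}. The one point that genuinely requires care is the appeal to Lemma \ref{lemma4}, which is what lets me equate out-edges in $T(\v)/\rho$ emanating from $\w$ with those in $T(\w)/\rho$; without it there would be a gap between \emph{out-valency in $T(\v)/\rho$} and \emph{root out-valency of $T(\w)/\rho$}, and the reduction to level $2k-1$ would not close.
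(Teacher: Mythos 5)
Your proposal is correct and follows essentially the same route as the paper, which presents this corollary as an immediate consequence of Corollaries \ref{lemma65} and \ref{lemma5} (via the observation preceding it that only finitely many $\rho$-isomorphism types of $T(\v)$ occur, all represented at level $2k-1$). You have merely made explicit the bookkeeping the paper leaves implicit, namely the appeal to Lemma \ref{lemma4} identifying the out-valency of a node $\w$ in $T(\v)/\rho$ with the root out-valency of $T(\w)/\rho$, which is a $\rho$-isomorphism invariant.
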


\subsection{Counting isomorphism types}
We let $\T$ be the class of structures $T$ with the following properties
\begin{itemize}
\item $T$ is a digraph of finite out-valency and $T = T(\u)$ for some finite set $\u \subseteq T$.
\item $T^s(\u) \cap T^t(\u) = \varnothing$ whenever $s\neq t$.
\item There is an equivalence relation $\rho$ on $T$ such that each $\rho$-class is contained in a  layer $T^s(\u)$.
\item The quotient digraph $T/\rho$ is a directed forest.
\item For every $\rho$-class $\w$ there is a $\rho$-class $\v \subseteq \u$ and a $\rho$-isomorphism between $T(\v)$ and $T(\w)$.
\end{itemize}

We show:

\begin{theorem}\label{thm1}
There are only countably many $\rho$-isomorphism types of structures in $\T$.
\end{theorem}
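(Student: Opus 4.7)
My plan is to describe each $T \in \T$ up to $\rho$-isomorphism by a finite amount of combinatorial data and then to count those data. First I list the $\rho$-classes contained in $\u$ as $\v_1, \ldots, \v_n$ and set $B_i = T(\v_i)$. Since $T/\rho$ is a directed forest whose roots are exactly the $\v_i$, the subtrees $B_i$ are pairwise vertex-disjoint, their union is $T$, and every edge of $T$ lies inside a single $B_i$; hence $T$ is determined, up to $\rho$-isomorphism, by the unordered family $\{B_1, \ldots, B_n\}$. By the self-similarity axiom, to every $\rho$-class $\w$ in $T$ one can assign an index $\tau(\w) \in \{1, \ldots, n\}$ together with a $\rho$-isomorphism $\phi_\w \colon B_{\tau(\w)} \to T(\w)$, so there are only $n$ possible ``types'' for the subtree hanging below any class.

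I then associate to each $B_i$ the following finite data: (a)~the integer $|\v_i|$; (b)~the bipartite digraph on the first two layers $\v_i \cup T^1(\v_i)$ together with the partition of $T^1(\v_i)$ into $\rho$-classes and the label $\tau(\w) \in \{1, \ldots, n\}$ on each such class $\w$; (c)~for every $\rho$-class $\w$ in $T^1(\v_i)$, the bijection $\sigma_\w = \phi_\w|_{\v_{\tau(\w)}} \colon \v_{\tau(\w)} \to \w$. All of this is finite because the out-valency of $T$ is finite. The key step is to show, by induction on $s$, that the data (a)--(c) for all $i$ determine each truncation $B_i^{\leq s}$ up to $\rho$-isomorphism: layers $0$ and $1$ are supplied by (a) and (b), and given $B_i^{\leq s}$ for every $i$, the children of each $\rho$-class $\w'$ at layer $s$ of $B_i$ (with label $j = \tau(\w')$) form a copy of the layer-$1$ structure of $B_j$, glued to $\w'$ via the bijection propagated from (c) through the attachment identifications already made at earlier stages. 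The forest property of $B_i/\rho$ ensures that the pieces attached below different classes are vertex-disjoint, so layer $s+1$ is built unambiguously.

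Finally, for fixed $n$ and fixed sizes $|\v_1|, \ldots, |\v_n|$, configurations of (b) and (c) form a finite set, and summing over the countably many choices of $n$ and of the sizes gives a countable set of possible data, hence countably many families $\{B_1, \ldots, B_n\}$ up to $\rho$-isomorphism, and therefore only countably many $T \in \T$. The main obstacle is the inductive reconstruction: one has to check that the bijections $\sigma_\w$ propagate coherently from one layer to the next, so that the pieces of layer~$1$ of $B_j$ attached under type-$j$ classes at deeper levels really reassemble $T$, and this is precisely where the forest property of $T/\rho$ is used to keep the attachments vertex-disjoint and well defined.
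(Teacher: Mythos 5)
Your route is genuinely different from the paper's (which records an \emph{unenriched} coloured ball of radius $N_T+1$ and gets by with a stabilisation argument, Lemma \ref{lemmaN}, plus K\"onig's lemma), and the decomposition of $T$ into the disjoint pieces $B_i$ enriched with the attaching bijections $\sigma_{\w}$ is a reasonable alternative. However, there is a genuine gap at exactly the step you call the main obstacle, and the forest property of $T/\rho$ does not close it. The forest property gives vertex-disjointness of the pieces hanging below distinct classes, i.e.\ that the attachments do not interfere with one another; it says nothing about \emph{which} bijection $\v_{j}\to\w'$ governs the attachment below a class $\w'$ at depth $s\geq 2$. Your reconstruction needs that the propagated bijection $\beta_{\w'}:\v_j\to\w'$ (obtained by chaining the recorded $\sigma$'s down the branch) extends to a $\rho$-isomorphism of the \emph{whole} subtree $B_j\to T(\w')$ --- not merely that some bijection $\v_j\to\w'$ does. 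Knowing only that $T(\w')$ is $\rho$-isomorphic to $B_j$ leaves the identification of $\w'$ with $\v_j$ ambiguous up to a permutation of $\v_j$, and attaching the layer-$1$ data of $B_j$ via a permutation that does not extend to a $\rho$-automorphism of $B_j$ can produce a structure not isomorphic to the true $T^{\leq s+1}$. This ambiguity is precisely what the paper's number $N_T$ (which can exceed $1$) is introduced to control, so it cannot be waved away.

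The gap is closable, but by a different mechanism than the one you cite: if $\w'$ has parent class $\w$ with label $j'$ and $\z=\phi_{\w}^{-1}(\w')$, then $\phi_{\w}|_{T(\z)}\circ\phi_{\z}:B_j\to T(\w')$ is a $\rho$-isomorphism whose restriction to $\v_j$ is exactly the propagated bijection; inducting down each branch with this composition shows every propagated bijection is extendible, and then your layer-by-layer attachment really does reproduce $T^{\leq s+1}$ together with valid bijection data for the next stage. To finish cleanly you also need to (i) choose the labels $\tau$ canonically (say, minimal index) so that $\rho$-isomorphisms preserve them, (ii) run the induction on isomorphisms of truncations that fix the root classes pointwise and are coherent as $s$ increases, so that the union of the finite stages is identified with $T$ itself (isomorphism of each truncation separately does not suffice without a K\"onig-type extraction, which is how the paper's Corollary \ref{isom} handles the same point), and (iii) note that for fixed $n$ and fixed $\vert\v_i\vert$ the configurations (b), (c) form a countable rather than finite set, since the out-valency is not bounded over $\T$ --- which is still enough for the theorem.
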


\begin{corollary}\label{mainresult}
There are only countably many isomorphism types of digraph $\Gamma$ which have properties G0, G1 and G3.
\end{corollary}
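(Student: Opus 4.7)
The plan is to associate to each such $\Gamma$ a countable piece of data from which the isomorphism type of $\Gamma$ can be recovered, and then to invoke Theorem \ref{thm1}.

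Let $k = k(\Gamma)$ be the least integer witnessing G3. Since the out-valency $m$ is finite, every layer $\Gamma^i(\alpha)$ is finite, so the initial segment $\Gamma^{\leq 2k-1}(\alpha) = \bigcup_{i=0}^{2k-1}\Gamma^i(\alpha)$ is a finite digraph. I would split $\Gamma$ at level $2k-1$ into this finite initial segment on the one hand, and the ``tail'' $T_\Gamma := \Gamma^{\geq 2k-1}(\alpha) = T(\u)$ based at $\u := \Gamma^{2k-1}(\alpha)$ (carrying the restriction of $\rho$) on the other.

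The first substantial step would be to verify $T_\Gamma \in \T$. Finite out-valency, finite base $\u$, and disjointness of the layers $T_\Gamma^s(\u) \subseteq \Gamma^{2k-1+s}(\alpha)$ are immediate from G0. Corollary \ref{lemma65} gives a rooted tree structure on each $T(\v)/\rho$ for $\v \subseteq \u$ a $\rho$-class, and Lemma \ref{lemma2} shows that for distinct $\rho$-classes $\v \neq \v' \subseteq \u$ the descendant sets $T(\v)$ and $T(\v')$ are disjoint (a common descendant would force $\rho(v,v')$ for $v \in \v$, $v' \in \v'$, contradicting $\v \neq \v'$), so $T_\Gamma/\rho$ is a forest. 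The remaining condition in the definition of $\T$ --- every $\rho$-class $\w$ of $T_\Gamma$ is $\rho$-isomorphic to one contained in $\u$ --- is precisely Corollary \ref{lemma5}.

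Next I would show that the triple $(k,\, \Gamma^{\leq 2k-1}(\alpha),\, [T_\Gamma])$, where $[T_\Gamma]$ denotes the $\rho$-isomorphism type of $T_\Gamma$ in $\T$, determines $\Gamma$ up to isomorphism. The key observation is that the $\rho$-partition of $\Gamma^{2k-1}(\alpha)$ is controlled by ancestors at level $k$ (by the definition of $\rho$), which all lie inside the initial segment; and this same partition is visible on the base $\u$ of any representative of $[T_\Gamma]$. So $\Gamma$ is recovered by gluing the initial segment to a representative of $[T_\Gamma]$ along any $\rho$-respecting bijection between the two copies of $\Gamma^{2k-1}(\alpha)$. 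Since this set is finite, only finitely many such bijections exist, giving finitely many isomorphism types per triple. As $k \in \N$ is countable, finite digraphs have countably many isomorphism types, and Theorem \ref{thm1} supplies countably many $\rho$-isomorphism types in $\T$, the corollary follows. The main obstacle is the final reconstruction step: one must confirm that the gluing via some $\rho$-respecting bijection really yields $\Gamma$ (and that different such bijections produce only finitely many isomorphism types); everything else is a direct packaging of the preceding structural lemmas.
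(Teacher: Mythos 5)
Your proposal is correct in substance and builds the same object as the paper, namely $T = \Gamma^{\geq 2k-1}$ with its $\rho$-structure based at $\u = \Gamma^{2k-1}$, verified to lie in $\T$ by exactly the lemmas you cite. Where you diverge is in the reconstruction step, and here the paper has a much lighter trick that dissolves the obstacle you flag: by G1, $\Gamma(u) \cong \Gamma$ for \emph{every} vertex $u$, and since $T$ is closed under descendants, the descendant set of any vertex of $T$ is already a copy of $\Gamma$ sitting inside $T$. So the $\rho$-isomorphism type of $T$ alone determines $\Gamma$; there is no need to carry the finite initial segment $\Gamma^{\leq 2k-1}$ as extra data, and no gluing is required. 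Your route does work: since every edge of $\Gamma$ joins consecutive levels, $\Gamma$ is covered by $\Gamma^{\leq 2k-1}$ and $T$ overlapping in $\Gamma^{2k-1}$, and given isomorphisms $f$ from an abstract representative of the initial segment and $g$ from a representative of $[T_\Gamma]$, gluing along the bijection $g^{-1}\circ f$ restricted to level $2k-1$ reproduces $\Gamma$ (and this bijection is automatically $\rho$-respecting because $\rho$ on $\Gamma^{2k-1}$ is determined by ancestors at level $k$, which lie inside the initial segment). Finitely many bijections then bound each fibre of your invariant, so the count goes through. The cost of your approach is precisely the extra verification you identify as the main obstacle; the benefit of the paper's is that G1 makes $T$ a complete invariant on its own, reducing the corollary to a two-line application of Theorem \ref{thm1}.
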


\noindent\textit{Proof of Corollary.\/} Fix such a $\Gamma$. Let $T$ be the disjoint union of  digraphs $T(\v)$ with the equivalence relation $\rho$  as in the previous section, taking $\v$ to be a $\rho$-class in $\Gamma^{2k-1}$. So in fact, $T = \bigcup_{\l \geq 2k-1} \Gamma^{\l}$. Then $T \in\T$, by Corollaries \ref{lemma652} and \ref{lemma5}. Moreover we can recover $\Gamma$ from $T$ by looking at the descendant set of any vertex in $T$. Thus there are only countably many possibilities for $\Gamma$, by the above Theorem. \hfill $\Box$

\medskip

We now prove Theorem \ref{thm1}. Let $T= T(\u) \in \T$ and let $\v_1,\ldots, \v_r$ be the $\rho$-classes in $T^0 = \u$. We colour a $\rho$-class $\v$ in $T$ with colour $C_i$ if $i$ is (as small as possible) such that $T(\v)$ is $\rho$-isomorphic to $T(\v_i)$. If $d \in \N$, then we denote by $B_T^d$ the digraph on $\bigcup_{s \leq d} T^s$ together with the structure given by the $\rho$-classes and the colouring on this set. Similarly if $\v$ is a $\rho$-class we denote by $B_T^d(\v)$ the corresponding structure on $\bigcup_{s\leq d} T^s(\v)$. 

In the following, by a \textit{$\rho-C$-isomorphism}  we mean a digraph isomorphism which preserves the relation $\rho$ and the colouring.

\begin{lemma}\label{lemmaN}
For $T \in \T$ there is a natural number $N= N_T$ with the property that if $d \geq N$, $\v, \v'$ are $\rho$-classes in $T$ and $\alpha' : B^d_T(\v) \to B^d_T(\v')$ is a $\rho-C$-isomorphism, then there is a $\rho-C$-isomorphism $\alpha : T(\v) \to T(\v')$ with $\alpha(x) = \alpha'(x)$ for all $x \in \v$.
\end{lemma}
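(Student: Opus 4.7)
The plan is to reduce to a statement about the finitely many rooted descendant structures $T(\v_1), \ldots, T(\v_r)$, where $\v_1, \ldots, \v_r \subseteq \u$ are the root $\rho$-classes, and then to deduce the existence of $N$ by a König-type compactness argument.

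For the reduction, observe first that $\alpha'$ sends the root $\rho$-class $\v$ of $B^d_T(\v)$ to the root $\rho$-class $\v'$ of $B^d_T(\v')$ (the root is characterized graph-theoretically as the set of vertices of the ball having no in-neighbour in the ball, since $T/\rho$ is a directed forest), so $\v$ and $\v'$ share a common colour $C_i$. The last clause in the definition of $\T$ then provides $\rho$-$C$-isomorphisms $\phi: T(\v) \to T(\v_i)$ and $\phi': T(\v') \to T(\v_i)$, which restrict to $\rho$-$C$-isomorphisms of the corresponding $d$-balls and of the root $\rho$-classes. Setting $\tilde\alpha' := \phi' \circ \alpha' \circ \phi^{-1}$, a $\rho$-$C$-automorphism of $B^d_T(\v_i)$, any $\rho$-$C$-automorphism $\tilde\alpha$ of $T(\v_i)$ with $\tilde\alpha|_{\v_i} = \tilde\alpha'|_{\v_i}$ pulls back to $\alpha := (\phi')^{-1} \circ \tilde\alpha \circ \phi$, a $\rho$-$C$-isomorphism $T(\v) \to T(\v')$ satisfying $\alpha|_\v = \alpha'|_\v$. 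Thus it is enough to find $N$ such that, for each $i \leq r$ and every $d \geq N$, the restriction to $\v_i$ of every $\rho$-$C$-automorphism of $B^d_T(\v_i)$ extends to a $\rho$-$C$-automorphism of $T(\v_i)$.

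For each $i \leq r$ let $P_i(d)$ denote the set of bijections $\beta: \v_i \to \v_i$ arising as the restriction to $\v_i$ of some $\rho$-$C$-automorphism of $B^d_T(\v_i)$. Since $\v_i \subseteq \u$ is finite, $P_i(d)$ lies in the finite symmetric group on $\v_i$, and $P_i(0) \supseteq P_i(1) \supseteq \cdots$ is a descending chain, hence stabilizes at some $N_i$ with $P_i(d) = P_i(N_i)$ for all $d \geq N_i$. Set $N := \max_{i \leq r} N_i$.

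It remains to show that any $\beta \in P_i(N_i)$ extends to a $\rho$-$C$-automorphism of all of $T(\v_i)$, not merely to each finite ball. Form the rooted tree $\mathcal{E}_\beta$ whose depth-$d$ nodes are the $\rho$-$C$-automorphisms of $B^d_T(\v_i)$ restricting to $\beta$ on $\v_i$, with edges given by restriction from depth $d+1$ to depth $d$. Since $T$ has finite out-valency and $\v_i$ is finite, each $B^d_T(\v_i)$ is finite, so $\mathcal{E}_\beta$ is finitely branching; by the stabilization of $P_i$ it has at least one node at every depth. König's lemma then yields an infinite branch, the union of which is a $\rho$-$C$-automorphism of $T(\v_i)$ extending $\beta$, as required. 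I anticipate the main point requiring care is the observation that $\rho$-$C$-isomorphisms of the rooted balls preserve the root $\rho$-class, which underlies the whole reduction; once this is secured, the descending chain condition on a finite set combined with König's lemma concludes the argument cleanly.
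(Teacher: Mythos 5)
Your proposal is correct and follows essentially the same route as the paper: a descending chain of subsets of a finite symmetric group on the root classes stabilizes at some $N$, a K\"onig's lemma argument passes from automorphisms of all finite balls to an automorphism of the full descendant structure, and composition with a colour-witnessing $\rho$-$C$-isomorphism reduces the general case to automorphisms of a root class. The only differences are presentational (you work class-by-class where the paper works with all of $T^0$ at once, and you make explicit the compactness step that the paper compresses into the assertion $A_0=\bigcap_d A_d$).
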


\begin{proof}
Let $A_0$ be the group of permutations induced on $T^0$ by $\rho-C$-automorphisms of $T$ which fix each $\rho$-class in $T^0$. Similarly for $d \geq 1$ let $A_d$ be the group of permutations induced on $T^0$ by $\rho-C$-automorphisms of $B^d_T$ which fix each $\rho$-class in $T^0$. Then $A_d \geq A_{d+1}$ and $A_0 = \bigcap_d A_d$, so there is a smallest integer $N\geq 1$ with $A_N = A_0$. In particular, for any $\rho$-class $\v$ in $T^0$, and $d \geq N$, any permutation of $\v$ which extends to a $\rho-C$-automorphism of $B^d_T(\v)$ extends to an automorphism of $T(\v)$. The same is therefore true for any $\rho$-class in $T$.

We show that this $N$ has the required property. So let $\v, \v'$ etc be as in the statement.  As $\v$, $\v'$ have the same colour, there is some $\rho-C$-isomorphism $\beta: T(\v) \to T(\v')$. Let $\beta'$ be its restriction to $B_T^d(\v)$. Then $\alpha'$, $\beta'$ both have image $B_T^d(\v')$ and $\gamma' = (\beta')^{-1}\circ \alpha'$ is a $\rho-C$-automorphism of $B_T^d(\v)$. So as $d \geq N$ there is a $\rho-C$-automorphism $\gamma$ of $T(\v)$ which agrees with $\gamma'$ on $\v$. It is easy to check that $\alpha = \beta\circ \gamma$ is a $\rho-C$-isomorphism with the required properties.
\end{proof}

\begin{proposition}\label{mainprop}
Suppose $T, S \in \T$ and $d > N_S$. If there is a $\rho-C$-isomorphism from $B^d_T$ to $B^d_S$, then there is a $\rho-C$-isomorphism from $B^{d+1}_T$ to $B^{d+1}_S$.
\end{proposition}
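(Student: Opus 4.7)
The plan is to exploit the forest structure of $T/\rho$ so that the construction of $\tilde\phi$ decouples into independent branch-by-branch problems. If $\v_1,\ldots,\v_r$ are the $\rho$-classes in $\u = T^0$, I would first verify the disjoint decomposition $T = \bigsqcup_i T(\v_i)$: every vertex is a descendant of $\u$ (since $T=T(\u)$), and two distinct root $\rho$-classes cannot share a descendant because in the directed forest $T/\rho$ each node has a unique root ancestor. The analogous decomposition $S = \bigsqcup_j S(\v'_j)$ holds. Since $\phi$ restricts to a $\rho$- and colour-preserving bijection $T^0 \to S^0$, it induces a bijection $\sigma$ on root $\rho$-classes with $\phi(\v_i) = \v'_{\sigma(i)}$, and matching colours. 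Because the colour of a $\rho$-class is by definition the $\rho$-isomorphism type of its full descendant set, this matching yields a $\rho$-$C$-isomorphism $\beta_i : T(\v_i) \to S(\v'_{\sigma(i)})$ for each $i$.

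Next I would use the hypothesis $d > N_S$ to modify each $\beta_i$ so that it actually agrees with $\phi$ on $\v_i$. The composition $\eta_i := \phi|_{B^d_T(\v_i)} \circ \beta_i^{-1}|_{B^d_S(\v'_{\sigma(i)})}$ is a $\rho$-$C$-automorphism of $B^d_S(\v'_{\sigma(i)})$. Lemma \ref{lemmaN} applied to $S$ with $\v = \v' = \v'_{\sigma(i)}$ then produces a $\rho$-$C$-automorphism $\tilde\eta_i$ of $S(\v'_{\sigma(i)})$ agreeing with $\eta_i$ on $\v'_{\sigma(i)}$, and $\tilde\phi_i := \tilde\eta_i \circ \beta_i$ is a $\rho$-$C$-isomorphism $T(\v_i) \to S(\v'_{\sigma(i)})$ whose restriction to $\v_i$ coincides with $\phi|_{\v_i}$.

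I would then assemble the required map by $\tilde\phi|_{B^{d+1}_T(\v_i)} := \tilde\phi_i|_{B^{d+1}_T(\v_i)}$ on each branch. The decompositions $B^{d+1}_T = \bigsqcup_i B^{d+1}_T(\v_i)$ and $B^{d+1}_S = \bigsqcup_i B^{d+1}_S(\v'_{\sigma(i)})$ make $\tilde\phi$ well-defined and bijective; since no digraph edge, $\rho$-class or colour relation crosses between distinct branches of the forest, $\tilde\phi$ is a $\rho$-$C$-isomorphism from $B^{d+1}_T$ to $B^{d+1}_S$.

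The main obstacle I expect is to see clearly how the forest property lets one reduce to branches, and to set up Lemma \ref{lemmaN} correctly: it guarantees agreement of $\tilde\eta_i$ with $\eta_i$ only on the single class $\v'_{\sigma(i)}$, not on the whole ball $B^d_S(\v'_{\sigma(i)})$, so the resulting $\tilde\phi$ will agree with $\phi$ only on $T^0$ and not on all of $B^d_T$. This weaker form of compatibility is nevertheless sufficient for the bare existence statement of the proposition, and clarifies why $d > N_S$ is exactly what is needed to invoke Lemma \ref{lemmaN} on the $S$-side.
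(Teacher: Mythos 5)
There is a genuine gap at the point where you assert the existence of the cross-structure isomorphisms $\beta_i : T(\v_i) \to S(\v'_{\sigma(i)})$. The colouring is defined \emph{internally} to each structure: a $\rho$-class $\w$ of $T$ gets colour $C_i$ when $T(\w)$ is $\rho$-isomorphic to $T(\v_i)$, the $i$-th root class \emph{of $T$}, while a class of $S$ gets colour $C_i$ when its descendant set is $\rho$-isomorphic to that of the $i$-th root class \emph{of $S$}. So the fact that $\phi$ matches colours only records that the two internal isomorphism patterns agree; it does not produce any isomorphism between a branch of $T$ and a branch of $S$. Indeed, if such $\beta_i$ existed for every root class, then your own disjoint-branch decomposition would immediately give $T \cong S$, i.e.\ the conclusion of Corollary \ref{isom} that this proposition is designed to prove by induction --- so the step is circular. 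Everything downstream (the correction of $\beta_i$ via Lemma \ref{lemmaN} and the branch-by-branch assembly, including your correct observation that agreement with $\phi$ is only needed on $T^0$) is sound, but it rests on this unavailable input. Note also that decomposing over the root classes $\v_i \subseteq T^0$ cannot by itself gain a level: $B^{d+1}_T(\v_i)$ genuinely contains vertices at depth $d+1$ that the hypothesis says nothing about, and some self-similarity step is needed to reach them.

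The paper's proof supplies exactly that step by decomposing $B^{d+1}_T \setminus T^0$ over the $\rho$-classes $\v_i$ of $T^1$ rather than $T^0$. The defining property of $\T$ gives isomorphisms $f_i : T(\u_i) \to T(\v_i)$ \emph{within} $T$, with $\u_i$ a root class; these are transported through the given finite isomorphism $\Phi$ to yield $\rho-C$-isomorphisms $\alpha_i' : B^{d-1}_S(\Phi(\u_i)) \to B^{d-1}_S(\Phi(\v_i))$ \emph{within} $S$; and only then is Lemma \ref{lemmaN} invoked (with $d-1 \geq N_S$, which is precisely where the hypothesis $d > N_S$ is used) to extend to $\alpha_i : S(\Phi(\u_i)) \to S(\Phi(\v_i))$. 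The composite $\alpha_i \circ \Phi \circ f_i^{-1}$ then maps $B^d_T(\v_i)$, which reaches down to level $d+1$ because $\v_i$ sits at level $1$, onto $B^d_S(\Phi(\v_i))$. Every isomorphism of infinite descendant sets used is internal to a single structure, and the only bridge between $T$ and $S$ is the finite map $\Phi$. If you shift your decomposition from $T^0$ to $T^1$ and replace your $\beta_i$ by $\Phi \circ f_i^{-1}$, your argument becomes the paper's.
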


\begin{proof}
Let $\Phi : B^d_T \to B^d_S$ be a $\rho-C$-isomorphism. Note that $d\geq 1$. Let $\v_1,\ldots, \v_s$ be the $\rho$-classes in $T^1$ and $\w_i = \Phi(\v_i)$. So $\w_1,\ldots, \w_s$ are the $\rho$-classes in $S^1$. For $i \in \{1,\ldots,s\}$ there is a $\rho$-class $\u_i$ in $T^0$ and a $\rho-C$-isomorphism $f_i : T(\u_i) \to T(\v_i)$. Let $\z_i = \Phi(\u_i)$ and $\alpha_i' : B^{d-1}_S(\z_i) \to B^{d-1}_S(\w_i)$ be given by 
\[\alpha_i'(y) = \Phi(f_i(\Phi^{-1}(y))).\]
So $\alpha_i'$ is a $\rho-C$-isomorphism. As $d-1 \geq N_S$ it follows by Lemma \ref{lemmaN} that there is a $\rho-C$-isomorphism $\alpha_i : S(\z_i) \to S(\w_i)$ which agrees with $\alpha'$ on $\z_i$. 

\smallskip

We define $\Psi : B^{d+1}_T \to B^{d+1}_S$ as follows. For $x \in T^0$ we let $\Psi(x) = \Phi(x)$. If $x \in B^{d+1}_T \setminus T^0$ then there is a unique $i \leq s$ with $x \in B^d_T(\v_i)$ and in this case we define
\[ \Psi(x) = \alpha_i(\Phi(f_i^{-1}(x))).\]

It is easy to see that $\Psi$ is a well-defined bijection between $B^{d+1}_T$ and $B^{d+1}_S$. As $f_i, \Phi$ and $\alpha_i$ all preserve $\rho$-classes and the colouring, the same is true of $\Psi$. So it remains to show that $\Psi$ preserves edges and non-edges.

First we show that if $x \in B^1_T$, then $\Psi(x) = \Phi(x)$. If $x \in T^0$ then this is by definition of $\Psi$. If $x \in T^1$ then $x \in \v_i$ for some unique $i \leq s$. So $f_i^{-1}(x) \in \u_i$ and $\Phi(f_i^{-1}(x)) \in \z_i$, whence 
\[\Psi(x) = \alpha_i\Phi f_i^{-1}(x) = \alpha_i'\Phi f_i^{-1}(x) = \Phi(x).\]
Thus $\Psi$ preserves edges and non-edges in $B^1_T$. 

If $x,y \in B^{d+1}_T\setminus T^0$ and $(x,y)$ is an edge, then $x,y \in B^d_T(\v_i)$ for some $i$. Then $\Psi(x) = \alpha_i\Phi f_i^{-1}(x)$ and $\Psi(y) = \alpha_i\Phi f_i^{-1}(y)$ and so, as $\alpha_i, \Phi$ and $f_i$ preserve edges, $(\Psi(x), \Psi(y))$ is an edge in $B^d_S$. By the same argument, if $x,y \in B^d_T(\v_i)$ and $(x,y)$ is a non-edge, then $(\Psi(x),\Psi(y))$ is a non-edge. Finally, if $x, y$ lie in different $B_T^d(\v_i)$ then $\Psi(x), \Psi(y)$ lie in different $B^d_S(\w_i)$, so $(\Psi(x),\Psi(y))$ is a non-edge.
\end{proof}

\begin{corollary}\label{isom} Suppose $T, S \in \T$ and $B_T^d$, $B_S^d$ are $\rho-C$-isomorphic for some $d\geq N_S$. Then $T$ and $S$ are $\rho-C$-isomorphic.
\end{corollary}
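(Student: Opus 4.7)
The plan is to iterate Proposition \ref{mainprop} upward from level $d$ to produce $\rho-C$-isomorphisms $B_T^n \to B_S^n$ for every $n \geq d$, and then to stitch these finite-level isomorphisms into a single global $\rho-C$-isomorphism $T \to S$ by a K\"onig's lemma (inverse-limit) compactness argument.

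First I would observe that since $d \geq N_S$, the hypothesis of Proposition \ref{mainprop} is inherited at every level $\geq d$. Applied inductively, the proposition yields, for each $n \geq d$, a non-empty set $I_n$ of $\rho-C$-isomorphisms $B_T^n \to B_S^n$. Because by definition of $\T$ we have $T = T(\u)$ for some finite $\u$ and $T$ has finite out-valency, induction on $s$ shows each layer $T^s$ is finite, so $B_T^n$ is finite; likewise for $B_S^n$. Hence each $I_n$ is a non-empty \emph{finite} set.

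Next, since a $\rho-C$-isomorphism preserves layers (they are determined by the digraph structure as directed distance from the root set), restriction defines maps $r_n : I_{n+1} \to I_n$. The isomorphisms furnished by Proposition \ref{mainprop} do not \emph{automatically} form a compatible tower (the proof pins agreement down only on $T^0 \cup T^1$), so to produce one I would apply K\"onig's lemma to the tree whose depth-$j$ nodes are tuples $(\Phi_d, \Phi_{d+1}, \ldots, \Phi_{d+j})$ with $\Phi_k \in I_k$ and $r_k(\Phi_{k+1}) = \Phi_k$. This tree is finitely branching (each $I_k$ is finite) and has nodes of every depth, since any $\Psi \in I_{d+j}$ restricts to a compatible tuple of length $j+1$. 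K\"onig then yields an infinite branch $(\Phi_n)_{n \geq d}$. Because $T = \bigcup_{n \geq d} B_T^n$ and $S = \bigcup_{n \geq d} B_S^n$, the union $\Phi = \bigcup_{n \geq d} \Phi_n$ is a well-defined bijection $T \to S$, and is a $\rho-C$-isomorphism because each edge, non-edge, $\rho$-class and colour is witnessed inside some single finite $B_T^n$.

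The substantive content of the corollary sits in Proposition \ref{mainprop} and, behind it, Lemma \ref{lemmaN}; the step remaining here is essentially an inverse-limit compactness argument, so I do not anticipate a real obstacle. The only points that need a quick sanity check are that the layers $T^s$ are finite (needed to apply K\"onig) and that $T$ is exhausted by the truncations $B_T^n$ (needed to conclude the limit isomorphism is defined on all of $T$); both are immediate from the definition of $\T$ and finite out-valency.
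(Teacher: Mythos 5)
Your proposal is correct and follows essentially the same route as the paper: iterate Proposition \ref{mainprop} to get non-empty finite sets $I_n$ of $\rho-C$-isomorphisms $B_T^n \to B_S^n$ for all $n \geq d$, note that restriction maps $I_{n+1} \to I_n$, and apply K\"onig's Lemma to extract a coherent tower whose union is the desired global isomorphism. Your extra remark that the isomorphisms produced by the proposition need not themselves be compatible, so that one must run K\"onig's Lemma on the tree of compatible tuples, is a worthwhile clarification of a point the paper leaves implicit.
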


\begin{proof}
By assumption and Proposition \ref{mainprop}, for $n \geq N_S$ the set $I_n$ of $\rho-C$-isomorphisms $B^n_T \to B^n_S$ is non-empty. Restriction gives a map $I_{n+1} \to I_n$ and so, as each $I_n$ is finite, K\"onig's Lemma implies that there is a $\rho-C$-isomorphism $T \to S$. 
\end{proof}

\noindent\textit{Proof of Theorem \ref{thm1}.\/} Suppose $T \in \T$. As above, consider this with a colouring of the $\rho$-classes. Let $N=N_T$ be as in Lemma \ref{lemmaN}. Then by Corollary \ref{isom}, the (coloured) ball $B^{N+1}_T$ determines $T$ within $\T$ up to isomorphism. There are only countably many possibilities for this finite structure, hence the result.\hfill $\Box$

\medskip

We now give the proof of Theorem \ref{mainresultA}.

\medskip

\noindent\textit{Proof of Theorem \ref{mainresultA}.\/} Let $D$ and $\Gamma = \Gamma_D$ be as in the statement of the theorem. First, we define the numbers $k(\Gamma)$ and $M(\Gamma)$. By Corollary \ref{15}, $\Gamma$ satisfies G0, G1, G3. Let $k = k(\Gamma)$ be the smallest value of $k$ which satisfies G3 for $\Gamma$. 

Let $\w$ be a $\rho$-class in $\Gamma^{2k-1}$. By distance transitivity, $\Aut(\Gamma)$ is transitive on each $\Gamma^\ell$, so if $\ell \geq 2k-1$ and $\v$ is a $\rho$-class in $\Gamma^\ell$, then there is a $\rho$-isomorphism from $\Gamma(\w)$ to $\Gamma(\v)$, by Corollary  \ref{lemma5}. Let $T_\Gamma \in \T$ consist of $\Gamma(\w)$ together with its $\rho$-structure. Note that we have only one `colour' $C_i$ used here, so $\rho$-isomorphisms will be $\rho-C$-isomorphisms in what follows.

Let $M(\Gamma) = 2k(\Gamma) + N_{T_{\Gamma}}$, where $N_{T_\Gamma}$ is as in Lemma \ref{lemmaN} (chosen as small as possible). Thus, from the proof of Lemma \ref{lemmaN}, $N_{T_\Gamma}$ is the smallest value of $N$ such that any permutation of $\w$ which extends to a $\rho$-automorphism of $T_{\Gamma}^{\leq N}$ extends to a $\rho$-automorphism of $T_\Gamma$. 

Now suppose that $D_1$, $D_2$ are distance transitive digraphs satisfying the hypotheses of the theorem. Let $\Gamma_i = \Gamma_{D_i}$ and suppose that $k(\Gamma_1) = k(\Gamma_2) = k$, $M(\Gamma_1) = M(\Gamma_2) = M$ and $\theta : \Gamma_1^{\leq M} \to \Gamma_2^{\leq M}$ is an isomorphism. As $k(\Gamma_1) = k(\Gamma_2)$, $\theta$ gives a $\rho$-isomorphism $\Gamma_1^\ell \to \Gamma_2^\ell$ for $k \leq \ell \leq M$. 

Let $\w_i \in \Gamma_i^{2k-1}$ be  $\rho$-classes, with $\theta(\w_1) = \w_2$. Let $T_i = \Gamma_i(\w_i)$, considered also with its $\rho$-structure. Then $N_{T_1} = N_{T_2}= N$ and $\theta$ gives a $\rho$-isomorphism between the balls $B_{T_1}^{N+1}$ and  $B_{T_2}^{N+1}$. By Corollary \ref{isom} (and the above remark on colours) $T_1$ and $T_2$ are isomorphic. It then follows that $\Gamma_1$ and $\Gamma_2$ are isomorphic, as required. \hfill $\Box$

\section{Constructions} \label{primitivesection}

In this section, we prove Theorem \ref{mainresultB}. The construction of the digraphs $D_\Gamma$ is as in \cite{Evans2} and we recall briefly some notation and terminology from there. 

Suppose $D$ is a digraph and $A \subseteq D$. We write $A \leq D$ if for every $a \in A$ we have $\desc(a) \subseteq A$. We say that $A \leq D$ is \textit{finitely generated (f.g.)\/}  if there is a finite $X \subseteq A$ with $A = \bigcup_{a \in X} \desc(a)$, and say that $X$ is a \textit{generating set\/} for $A$. 

We write $A \leq^+ D$ if $A \leq D$ and
\begin{enumerate}
\item[(i)]  for every $b \in D$, if $\desc(b) \setminus A$ is finite, then $b \in A$;
\item[(ii)] for all $b \in D$, $\desc(b)\cap A$ is finitely generated.
\end{enumerate}
It is easy to check (cf. Lemma 2.2 of \cite{Evans2}) that if $A \leq^+ B \leq^+ C$ then $A\leq^+ C$ (and similarly for $\leq$).

\medskip

We work with a digraph $\Gamma$ having the following properties:
\begin{enumerate}
\item[\textbf{P0}]$\Gamma = \Gamma(\gamma)$ is a rooted digraph with finite out-valency $m > 0$ and $\Gamma^s(\gamma) \cap \Gamma^t(\gamma) = \emptyset$ whenever $s\neq t$.

\item[\textbf{P1}]$\Gamma(u)\cong \Gamma $ for all $u\in \Gamma$.

\item[\textbf{P2}] For all $a \in \Gamma$ we have $\desc(a) \leq^+ \Gamma$

\item[\textbf{P3}] For all natural numbers $n$,  $\Aut(\Gamma)$ is transitive on $\Gamma^n$. \end{enumerate}

Of course, P0 and P1 are the same as G0, G1 and P3 implies G3 (as in Lemma \ref{23A}). From Section \ref{prelim}, if $\Gamma$ is the descendant set of a vertex in an infinite, distance transitive digraph  $D$ of finite out-valency and with no directed cycles then $\Gamma$ satisfies P0, P1, P3. If $D$ is primitive, then (as noted in \cite{Amato} under the stronger assumption of high arc transitivity) $\Gamma$ satisfies the following weaker version of P2:

\begin{enumerate}
\item[\textbf{P2$'$}] For all  $a_1, a_2 \in \Gamma$, if $\Gamma(a_1)\setminus \Gamma(a_2)$ and $\Gamma(a_2)\setminus \Gamma(a_1)$ are finite, then $a_1 = a_2$.
\end{enumerate}

Note in particular that P2 implies that different vertices have different sets of out-vertices.

Section 5 of \cite{Amato} gives examples $\Gamma(\Sigma, k)$ which satisfy P0, P1, P2$'$, P3 and it can be checked that these examples also satisfy P2. In this section we prove that if $\Gamma$ satisfies P0-P3, then there is a primitive digraph $D_\Gamma$ with $\Gamma$ as its descendant set. If $\Gamma$ has the property that $\Aut(\Gamma)$ is transitive on $n$-arcs from $\gamma$ (as is the case with the $\Gamma(\Sigma, k)$ from \cite{Amato}), then the $D_\Gamma$ which we construct will be highly arc transitive.

The construction of $\DG$ is essentially the same Fra\"{\i}ss\'e amalgamation class construction which was used in \cite{Evans2}. We will recall this briefly, making use of results from \cite{AET}. Once we have $\DG$, the main work of the section will be in proving primitivity of $\Aut(\DG)$. 

So suppose $\Gamma$ satisfies P0-P3. Let $\CB_\Gamma$ consist of the  digraphs $A$ with the property that for every $a \in A$, $\desc(a) \leq^+ A$ and $\desc(a) \cong \Gamma$. Let $\C_\Gamma$ be the finitely generated elements of $\CB_\Gamma$. Note that $\Gamma \in \C_\Gamma$, so in particular, $\C_\Gamma$ is non-empty. 

If $A, B \in \CB_\Gamma$ a digraph embedding $f : A \to B$ is called a \textit{$\leq^+$-embedding} if $f(A) \leq^+ B$. We say that $\leq^+$-embeddings $f_i : A \to B_i$ (for $i  = 1, 2$) are \textit{isomorphic} if there is a digraph isomorphism $h : B_1 \to B_2$ with $f_2 = h\circ f_1$. 

\begin{lemma} (cf. 2.14 of \cite{Evans2}) \label{countable} Suppose $\Gamma$ satisfies P0-P3. Then
\begin{enumerate}
\item there are countably many isomorphism types of digraphs in $\C_\Gamma$;
\item if $A, B \in \C_\Gamma$ then there are countably many isomorphism types of $\leq^+$-embeddings $f : A \to B$.
\end{enumerate}
\end{lemma}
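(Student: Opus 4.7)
My plan is to prove the two parts of the lemma together, exploiting the countability of every $A \in \C_\Gamma$. Every such $A$ is countable: by P0 and the finite out-valency $m$, one has $|\Gamma^s(\gamma)| \leq m^s$ by induction, so $\Gamma$ is countable, and a finitely generated element of $\C_\Gamma$ is a finite union of copies of $\Gamma$. It follows that $A$ has only countably many finite subsets, hence only countably many finitely generated $\leq^+$-substructures (each of which is $\bigcup_{c \in C}\desc(c)$ for some finite $C$).

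For part (2), the image $f(A)$ of a $\leq^+$-embedding $f : A \to B$ is a finitely generated $\leq^+$-substructure of $B$, so by the observation above there are only countably many choices for $f(A)$. Two embeddings whose images lie in the same $\Aut(B)$-orbit and whose induced isomorphisms $A \to f(A)$ are matched by an element of $\Aut(B)$ give the same isomorphism type, and under the natural Fra\"iss\'e-style interpretation of isomorphism of embeddings this bounds the number of isomorphism types by the (countable) number of $\Aut(B)$-orbits of finitely generated $\leq^+$-substructures of $B$ isomorphic to $A$.

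For part (1), I induct on the number $n$ of generators of $A$. The base case $n=1$ gives $A \cong \Gamma$, a single isomorphism type. For the inductive step, let $A$ be generated by $\{a_1,\dots,a_n\}$ and set $A' := \bigcup_{i<n}\desc(a_i)$. I check that $A' \in \C_\Gamma$ (generated by $n-1$ elements): for $a \in A'$ we have $\desc(a) \subseteq A'$ by downward closure, and $\desc(a) \leq^+ A'$ follows from $\desc(a) \leq^+ A$ together with $A' \leq A$ by transitivity of $\leq^+$ (cited in the text). By the inductive hypothesis, there are only countably many isomorphism types of such $A'$. Given $A'$, the structure $A = A' \cup \desc(a_n)$ is recovered from the intersection $S := A' \cap \desc(a_n) = \bigcup_{i<n}(\desc(a_i) \cap \desc(a_n))$; each factor is finitely generated by condition~(ii) of $\desc(a_n) \leq^+ A$, so $S$ is finitely generated. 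A similar argument gives $S \leq^+ A'$ and $S \in \C_\Gamma$. There are only countably many possibilities for $S$ viewed as a finitely generated substructure of the countable $\Gamma$, and by (2) there are only countably many $\leq^+$-embeddings $S \to A'$ up to isomorphism. Combining, $A$ has only countably many isomorphism types.

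The main obstacle is the bookkeeping of $\leq^+$ properties and verifying that the attachment data $(S, \text{embeddings into } \Gamma \text{ and } A')$ really does determine the isomorphism type of $A$. In particular one must be careful that, although the union $A'$ need not be $\leq^+$ in $A$, the intersection $S = A' \cap \desc(a_n)$ nevertheless is $\leq^+$ in $A'$ and is finitely generated, so that (2) applies. Once these structural facts are verified, the countability is a routine counting argument over the finite data describing the attachment.
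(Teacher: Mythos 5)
The paper does not prove this lemma directly: it quotes Lemma 4.2 of \cite{AET}, after checking that $\Gamma$ satisfies the hypotheses (T1--T4, hence (C1), (C2)) of the results there. Those hypotheses are where P3 and the structure theory of descendant sets enter. Your argument, by contrast, uses nothing about $\Gamma$ beyond its countability and the formal properties of $\leq^+$, and that is the symptom of a genuine gap.

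The gap is in part (2). Counting the possible images $f(A)$ is not the same as counting isomorphism types of embeddings. Fix a finitely generated $\leq^+$-subset $C\subseteq B$ with $C\cong A$ and fix one isomorphism $g_0:A\to C$; every embedding with image $C$ is $g_0\circ\theta$ with $\theta\in\Aut(A)$, and $g_0\theta_1$, $g_0\theta_2$ are isomorphic as embeddings precisely when the automorphism $g_0\theta_2\theta_1^{-1}g_0^{-1}$ of $C$ extends to an automorphism of $B$. So the isomorphism classes of embeddings with image $C$ are in bijection with the cosets in $\Aut(C)$ of the subgroup of automorphisms of $C$ that extend to $B$. Since $\Aut(A)\cong\Aut(C)$ is typically uncountable (already for $\Gamma$ a regular rooted tree), one must prove that this subgroup has countable index, and that is exactly the content of the cited results: it follows from the rigidity statements of Section \ref{structuresection} (e.g.\ the analogue of Lemma \ref{lemmaN}: an automorphism of a suitable finite ball already determines whether an extension exists), which in turn need P3. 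Your sentence ``under the natural Fra\"iss\'e-style interpretation\dots this bounds the number of isomorphism types by the number of $\Aut(B)$-orbits of substructures'' asserts a bound that is false in general. Part (1) has a sound skeleton --- the free-amalgam decomposition $A=A'\cup\desc(a_n)$ over $S=A'\cap\desc(a_n)$ is legitimate, since descendant-closed subsets carry no cross edges, and $S\leq^+ A'$ does hold --- but it leans on part (2), and you should also note that $S$ need not be $\leq^+$ in $\desc(a_n)$ (condition (i) of $\leq^+$ can fail for a union $A'$ of $\leq^+$-sets), and that recovering the amalgam from the two embedding types separately involves a further double-coset count over $\Aut(S)$, which raises the same uncountability issue again.
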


\begin{proof} This follows from results in Section 4 of \cite{AET}. The digraph $\Gamma$ satisfies the conditions T1, T2, T3, T4 in Theorem 4.3 of \cite{AET} (the first three are just P0, P1, P3 and T4 follows from these as in Remark 4 of \cite{AET}). As in the proof of Corollary 4.4 of \cite{AET}, it follows that $\Gamma$ satisfies conditions (C1), (C2) of Theorem 4.1 of \cite{AET}. Under these conditions, Lemma 4.2 of \cite{AET} gives the stronger result that there are countably many isomorphism types of digraph embeddings $f : A \to B$ with $A, B \in \C_\Gamma$ and $f(A) \leq B$. (Note that $\C_\Gamma$ as defined here is a subset of the $\C_\Gamma$ defined in Section 4.1 of \cite{AET}.) The result we want follows: for (1), take $A = \emptyset$ and (2) is immediate.
\end{proof}

It is easy to show that $\C_\Gamma$ is closed under free amalgamation over finitely generated $\leq^+$-subsets (as in Lemma 2.6 of \cite{Evans2}). More formally, if  $B_1, B_2 \in \C_\Gamma$ and $A \leq^+ B_i$ is f.g., then the digraph $F$ which has vertices the disjoint union of $B_1$ and $B_2$ over $A$ and whose edges are the edges of $B_1$ and $B_2$ is also in $\C_\Gamma$. Furthermore,  $B_1, B_2 \leq^+ F$. This gives the following $\leq^+$-amalgamation property:

\begin{lemma} \label{amal} Suppose $A, B_1, B_2 \in \C_\Gamma$ and $f_i : A \to B_i$ are $\leq^+$-embeddings (for $i = 1,2$). Then there exist $F \in \C_\Gamma$ and $\leq^+$-embeddings $g_i : B_i \to F$ with the property that $g_1(f_1(a)) = g_2(f_2(a))$ for all $a \in A$. \hfill $\Box$
\end{lemma}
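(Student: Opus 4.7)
The plan is to realise $F$ as the free amalgamation of $B_1$ and $B_2$ over $A$ along $f_1,f_2$, and then invoke the closure of $\C_\Gamma$ under free amalgamation over finitely generated $\leq^+$-subsets that was recorded in the paragraph preceding the statement.

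First I would normalise the set-up. Replace $A$ by its image $f_1(A)\leq^+ B_1$, so that $f_1$ becomes the inclusion. The digraph isomorphism $f_2\circ f_1^{-1}\colon f_1(A)\to f_2(A)\leq^+ B_2$ lets me rename the vertices of $B_2$ so that $f_2(A)=f_1(A)$ inside $B_2$; after this renaming $f_2$ is also an inclusion, and $A$ sits as a finitely generated $\leq^+$-subdigraph of both $B_1$ and $B_2$ (the finite generation of $A$ comes from $A\in\C_\Gamma$).

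Now form the free amalgamation $F$: the vertex set is $B_1\cup B_2$ with $B_1\cap B_2=A$, and the edge set is $E(B_1)\cup E(B_2)$, so no new edges are added between $B_1\setminus A$ and $B_2\setminus A$. The closure statement quoted before the lemma gives $F\in\C_\Gamma$ together with $B_1,B_2\leq^+ F$. Taking $g_i$ to be the inclusion $B_i\hookrightarrow F$ (after the identifications above) therefore yields $\leq^+$-embeddings, and by construction $g_1(f_1(a))=g_2(f_2(a))$ for every $a\in A$, since after the renamings both sides are simply the common image of $a$ in $F$.

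I do not expect any serious obstacle: the content of the lemma is already packaged into the preceding free-amalgamation closure. If I had to unpack that closure directly, the key point would be that for $a\in A$ the descendant set does not grow in the amalgam, because $A\leq B_i$ forces $\desc_{B_i}(a)\subseteq A$ for both $i$, so $\desc_F(a)=\desc_A(a)\cong\Gamma$; the two clauses defining $B_i\leq^+ F$ then reduce, for vertices of $B_j\setminus A$ ($j\neq i$), to the corresponding clauses of $A\leq^+ B_j$, while for vertices of $B_i$ they are inherited from $B_i$ itself.
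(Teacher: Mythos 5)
Your proposal is correct and matches the paper exactly: the lemma is stated with an immediate $\Box$ precisely because it is the free amalgamation closure recorded in the preceding paragraph, applied after identifying $A$ with its images under $f_1$ and $f_2$. Your optional unpacking of why $B_i \leq^+ F$ (descendants of vertices of $A$ stay in $A$ since $A \leq B_i$, so the $\leq^+$ clauses reduce to those of $A \leq^+ B_j$) is also the right justification.
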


Note that if $f_1$ is inclusion, then we can also take $g_1$ to be inclusion here.

Once we have these lemmas, the following existence and uniqueness result is fairly standard and we omit some of the details of the proof.

\begin{theorem}\label{11} There is a countable digraph $D_\Gamma$ with the properties:
\begin{enumerate}
\item If $a \in D_\Gamma$ then $\desc(a) \leq^+ D_\Gamma$ and $\desc(a)\cong \Gamma$.
\item If $X \subseteq D_\Gamma$ is finite, there is a f.g. $A\leq^+ D_\Gamma$  with $X \subseteq A \in \C_\Gamma$.
\item If $A \leq^+ D_\Gamma$ is f.g. and $f : A \to B \in \C_\Gamma$ is such that $f(A) \leq^+ B$ then there is $g : B \to D_\Gamma$ with $gf(a) = a$ for all $a \in A$ and $g(B) \leq^+ D_\Gamma$. 
\end{enumerate}
Moreover, $D_\Gamma$ is uniquely determined up to isomorphism by these conditions and is $\leq^+$-homogeneous, meaning that if $A_1, A_2 \leq^+ D_\Gamma$ are f.g. and $h : A_1\to A_2$ is an isomorphism, then $h$ extends to an automorphism of $D_\Gamma$.
\end{theorem}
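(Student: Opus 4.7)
My plan is to build $\DG$ by the standard Fra\"{\i}ss\'e-type inductive construction from the class $\C_\Gamma$, with Lemmas~\ref{countable} and~\ref{amal} as the essential inputs. Set $A_0 = \Gamma$. At stage $n$, having produced $A_n \in \C_\Gamma$ with $A_0 \leq^+ \cdots \leq^+ A_n$, list up to isomorphism all pairs $(A,f)$ where $A \leq^+ A_n$ is f.g.\ and $f : A \to B$ is a $\leq^+$-embedding with $B \in \C_\Gamma$; the f.g.\ subdigraph $A_n$ admits only finitely many such $A$, and Lemma~\ref{countable}(2) bounds the isomorphism types of $f$ by countably many for each such $A$. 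Dovetailing over all stages, I would use Lemma~\ref{amal} at stage $n$ to amalgamate $A_n$ with the scheduled $B$ over the scheduled $A$, keeping the inclusion $A_n \hookrightarrow A_{n+1}$, so that every problem is eventually solved inside some later $A_m$. Let $\DG = \bigcup_n A_n$.

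To verify (1)--(3), transitivity of $\leq^+$ gives $A_n \leq^+ \DG$ for all $n$, so in particular $\desc_\DG(a) = \desc_{A_n}(a)$ whenever $a \in A_n$. For (1), any $a \in \DG$ lies in some $A_n \in \CB_\Gamma$, whence $\desc(a) \cong \Gamma$ and $\desc(a) \leq^+ A_n \leq^+ \DG$. Property (2) is immediate, since a finite $X \subseteq \DG$ lies in some $A_n$, which is itself a f.g.\ element of $\C_\Gamma$ with $A_n \leq^+ \DG$. For (3), let $A \leq^+ \DG$ be f.g.\ with generators inside $A_n$; then $A \subseteq A_n$ (using $A_n \leq \DG$), and the easy fact that a $\leq^+$-subset of a digraph which is itself $\leq^+$ in a common ambient digraph is $\leq^+$ in the smaller one gives $A \leq^+ A_n$. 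Hence $(A,f)$ is in the schedule at stage $n$ and was realized inside some $A_m$, providing $g : B \to A_m \subseteq \DG$ with $g(B) \leq^+ A_m \leq^+ \DG$.

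Uniqueness and $\leq^+$-homogeneity follow by a back-and-forth. Suppose $D$ and $D'$ satisfy (1)--(3) and $h : X \to X'$ is an isomorphism of f.g.\ $\leq^+$-subsets. Enumerate $D$ and $D'$; to add a next $d \in D$ to the domain, apply (2) in $D$ to find a f.g.\ $A^* \leq^+ D$ in $\C_\Gamma$ with $X \cup \{d\} \subseteq A^*$. Then $X \leq^+ A^*$, so the composition $X' \xrightarrow{h^{-1}} X \hookrightarrow A^*$ is a $\leq^+$-embedding, and (3) applied to $D'$ produces $g : A^* \to D'$ with $g|_X = h$ and $g(A^*) \leq^+ D'$; replace $h$ with $g$, and handle $D'$ symmetrically. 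The resulting chain of partial isomorphisms unions to an isomorphism $D \to D'$ extending $h$, giving both uniqueness (take $h$ empty, which is a valid $\leq^+$-isomorphism since every $\desc(b) \cong \Gamma$ is infinite, so $\emptyset \leq^+ D, D'$) and $\leq^+$-homogeneity (take $D = D'$). Given Lemmas~\ref{countable} and~\ref{amal}, no step is a real obstacle; the only delicate checks are that the amalgamation at stage $n$ keeps $A_n$ as a $\leq^+$-subset of $A_{n+1}$, and that the back-and-forth preserves the condition of being a f.g.\ $\leq^+$-subset at every stage.
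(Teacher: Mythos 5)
Your proposal is correct and follows essentially the same route as the paper: a dovetailed chain of $\leq^+$-extensions in $\C_\Gamma$ built via Lemmas \ref{countable} and \ref{amal}, with properties (1)--(3) read off from the chain and uniqueness/homogeneity obtained by the same back-and-forth using (2) and (3). Your explicit checks (that $A\leq^+ A_n$ when $A\leq^+\DG$ has generators in $A_n$, and that $\emptyset\leq^+ D$ since descendant sets are infinite) are correct and fill in details the paper leaves implicit.
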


\begin{proof} Note that (1) here follows from (2). For the existence part, we build a chain of digraphs $D_i \in \C_\Gamma$ 
\[ D_1 \leq^+ D_2 \leq^+ D_3 \leq^+ \ldots \]
with the property:
\begin{quote}{(*)} if $A \leq^+ D_i$ is finitely generated and $f : A \to B$ is a $\leq^+$-embedding with $B \in \C_\Gamma$, then there is some $j \geq i$ and a $\leq^+$-embedding $g : B \to D_j$ such that $g(f(a)) = a$ for all $a \in A$. \end{quote}
Once we have this, we let $D_\Gamma$ be the union $\bigcup_{n \in \N} D_n$. Then (2) follows as each $D_n$ is in $\C_\Gamma$, and (3) follows from (*). 

In order to obtain (*) we build the $D_n$ inductively. During this process, there will be countably many `tasks' to be performed: there are countably many choices of f.g. $A$ in each $D_i$ and countably many isomorphism types of $\leq^+$-embeddings $f : A \to B$ with $B \in \C_\Gamma$ (by Lemma \ref{countable}). As we have countably many steps available during the construction, it will suffice to show how to complete one of these tasks: ensuring that they are all completed during some stage of the construction is then just a matter of organisation (see the proof of Theorem 2.8 of \cite{EmmsEvans} for a  formal way of doing this). 

So suppose $D_n$ has been constructed, $A \leq^+ D_n$ is f.g. and $f : A \to B$ is a $\leq^+$-embedding with $B \in \C_\Gamma$. Using amalgamation (Lemma \ref{amal}) we can find $D_n \leq^+ D_{n+1}$ and a $\leq^+$-embedding $g : B \to D_{n+1}$ with $g(f(a)) = a$ for all $a \in A$, as required. 

This completes the construction of some countable digraph $D_\Gamma$ with properties (1), (2), (3). For the `Moreover' part, suppose $D'_\Gamma$ is also a countable digraph with properties (1), (2), (3). Suppose $A \leq^+ D_\Gamma$ and $A' \leq^+ D'_\Gamma$ are f.g. and $h: A \to A'$ is an isomorphism. It will suffice to prove that $h$ extends to an isomorphism $D_\Gamma \to D'_\Gamma$. As $D_\Gamma, D'_\Gamma$ are countable, this follows by a back-and-forth argument (and symmetry) once we show:

\textit{Claim:\/} If $c \in D_\Gamma$ there exist finitely generated $B \leq^+ D_\Gamma$ and $B' \leq^+ D'_\Gamma$ with $A \leq^+ B$, $A' \leq^+ B'$ and $c \in B$ and an isomorphism $g: B \to B'$ extending $h$.

Existence of $B$ here follows from (2) in $D_\Gamma$. Existence of $g$ and $B'$ then follows from (3) in $D'_\Gamma$ (applied to $h^{-1} : A' \to B$). 
\end{proof}

 It is clear that $\leq^+$-homogeneity together with Property (1) in Theorem \ref{11} imply that $\Aut(D_\Gamma)$ is transitive on vertices. Moreover,  for every $a \in D_\Gamma$, any automorphism of the descendent set $D_\Gamma(a)$ extends to an automorphism of $D_\Gamma$ (necessarily fixing $a$) and so by P3, $\Aut(D_\Gamma/a)$ (the stabilizer of $a$) is transitive on $D_\Gamma^n(a)$ (vertices reachable by an $n$-arc from $a$). Thus $D_\Gamma$ is distance transitive. 

The remainder of this section is devoted to showing:

\begin{theorem}\label{main}
With the above notation, $\Aut(D_\Gamma)$ is primitive on the vertices of $D_\Gamma$.
\end{theorem}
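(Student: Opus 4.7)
The plan is to argue by contradiction. Assume $\sigma$ is a non-trivial $\Aut(D_\Gamma)$-invariant equivalence relation, fix $a \in D_\Gamma$ and set $B = [a]_\sigma$, so that $|B| \geq 2$; the goal is to derive $B = D_\Gamma$, contradicting non-triviality. Write $G = \Aut(D_\Gamma)$.

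I would first describe the orbits of the stabiliser $G_a$ on $D_\Gamma$. By the $\leq^+$-homogeneity of $D_\Gamma$ (Theorem \ref{11}) combined with P3, the $G_a$-orbits inside $\desc(a)$ are exactly the layers $\desc^n(a)$ for $n \geq 0$; outside $\desc(a)$ the orbits are indexed by the isomorphism types of f.g.\,$\leq^+$-closures of $\{a, c\}$ in $D_\Gamma$, every such type being realised by Property~(3) of Theorem \ref{11}. Since $B$ is $G_a$-invariant it is a union of $G_a$-orbits; the core of the argument is to show that $B \cap \desc^n(a) \neq \emptyset$ for some $n \geq 1$.

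To achieve this, pick $b \in B\setminus\{a\}$ and exploit the symmetry that $B = [b]_\sigma$ is also $G_b$-invariant. Combining $G_a$- and $G_b$-invariance of $B$ with a sequence of amalgamations in $\C_\Gamma$ (Lemma \ref{amal} together with Property~(3) of Theorem \ref{11}), one constructs configurations in $D_\Gamma$ of several mutually $\sigma$-related vertices whose joint structure forces some element of $B$ to lie in $\desc(a)$. Concretely, by amalgamating copies of the f.g.\,$\leq^+$-closure $A = \desc(a)\cup\desc(b)$ along suitable $\leq^+$-subsets, one produces new automorphisms of $D_\Gamma$ moving $b$ through increasingly complex configurations until it lands in some $\desc^n(a)$; here P2 (rather than the weaker P2$'$) is crucial because it guarantees that the descendant sets involved remain $\leq^+$ in the amalgams, keeping them in $\C_\Gamma$. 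Once $\desc^n(a) \subseteq B$, applying the same argument with each $c \in \desc^n(a)$ in place of $a$ propagates $B$ throughout the descendant tree of $a$; in particular $\desc^1(a) \subseteq B$.

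The conclusion is then short. With an out-neighbour of $a$ in $B$, the $G$-invariance of $\sigma$ together with edge-transitivity (from distance transitivity of $D_\Gamma$ observed just before the theorem) places every directed edge of $D_\Gamma$ in $\sigma$. A further amalgamation argument in $\C_\Gamma$, again invoking Property~(3) of Theorem \ref{11}, shows that any two vertices of $D_\Gamma$ are joined by a sequence of directed edges (for instance via a common ancestor constructed inside $\C_\Gamma$ and realised in $D_\Gamma$), whence $\sigma = D_\Gamma \times D_\Gamma$, a contradiction. The main obstacle is the amalgamation in the central reduction: engineering the moves that eventually place some element of $B$ in a layer $\desc^n(a)$ is substantially more delicate than in the tree case of \cite{Evans2}, where descendants of incomparable vertices are disjoint, and it is precisely the hypothesis P2 that unlocks the required amalgamations inside $\C_\Gamma$.
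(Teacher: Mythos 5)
Your overall framing (assume a non-trivial invariant equivalence relation and show its classes are everything) is the block-system side of the same Higman criterion the paper uses via orbital graphs, so the strategies are nominally equivalent; but the proposal is missing the entire central argument, and the step you do sketch points in a direction that does not work. You say that amalgamation ``produces new automorphisms of $D_\Gamma$ moving $b$ through increasingly complex configurations until it lands in some $\desc^n(a)$.'' No mechanism is given for this, and there is an immediate obstruction: every element of $\Aut(D_\Gamma/a)$ preserves $\desc(a)$ setwise, so no automorphism fixing $a$ can move $b\notin\desc(a)$ into $\desc(a)$; nor is any reason offered why the block $B$ should already meet $\desc(a)\setminus\{a\}$. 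The genuine difficulty -- which you correctly identify but do not resolve -- is that for incomparable $a,b$ the intersection $\Gamma(a)\cap\Gamma(b)$ can be a non-trivial f.g.\ $\leq^+$-subset, and free amalgamation over it does not by itself separate anything. The paper's solution (Proposition \ref{mainprop2}) is to build a chain $b_0=a$, $b_1=b$, $b_{i+1}=g_ib_i$ with $g_i\in\Aut(D_\Gamma/b_{i-1})$ chosen so that $\Gamma(b_{i+1})\cap\Gamma(b_i)\subseteq\Gamma(b_{i-1})$, and then to use the equivalence relations $\rho_a$ and $\sigma_a$ from the structure theory of Section \ref{structuresection} to show that the finite sets $Z_i=\Gamma^n(b_i)\cap\Gamma^n(b_{i+1})$ are unions of common $\sigma$-classes and can be forced to decrease strictly (using P3 to find an automorphism of $\Gamma(b_{i-1})$ not stabilising $Z_{i-1}$), until the descendant sets are disjoint and Lemma \ref{14} applies. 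None of this -- the chain, the level-$n$ truncation, the role of $\rho$ and $\sigma$, the strict descent -- appears in your proposal, and it is precisely the content of the theorem.

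Two further points. First, your concluding step assumes that any two vertices of $D_\Gamma$ are joined by a path of directed edges, ``for instance via a common ancestor constructed inside $\C_\Gamma$.'' A common ancestor $c$ of $x_1,x_2$ would need $\desc(c)\cong\Gamma$ to contain both $x_1$ and $x_2$ together with their full ($\leq^+$-closed) descendant sets, and there is no reason such a configuration lies in $\C_\Gamma$; the paper instead gets connectivity of the edge-orbital graph as the case $b\in\desc^1(a)$ of Lemma \ref{17}, which again rests on Proposition \ref{mainprop2}. Connectivity of the underlying undirected graph of $D_\Gamma$ is an output of the primitivity proof, not an available input. Second, your remark that P2 ``guarantees that the descendant sets involved remain $\leq^+$ in the amalgams'' is roughly the right role for P2 (it is what makes $\desc(a)\leq^+ D_\Gamma$ and hence makes Lemma \ref{12new} and the free amalgamations over $\Gamma(a)$ legitimate), but invoking it does not substitute for the descent argument. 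As it stands the proposal identifies the hard step and then asserts it; the proof has a genuine gap there.
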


\noindent By the above remarks, Theorem  \ref{mainresultB} then follows. 

\medskip

The following lemma is a simple application of free amalgamation and the extension property (3) in Theorem \ref{11}, but we shall give the details.

\begin{lemma}\label{12new}
Suppose $A \leq^+ B \leq^+ D_\Gamma$ and $A, B$ are finitely generated. Suppose $h$ is an automorphism of $A$. Then $h$ can be extended to $g \in \Aut(D_\Gamma)$ so that $B \cap gB = A$.
\end{lemma}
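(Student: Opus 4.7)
The plan is to exhibit, via free amalgamation, a $\leq^+$-subset $B'$ of $D_\Gamma$ that is a disjoint copy of $B$ meeting $B$ exactly in $A$, with the identification restricting to $h$ on $A$; then $\leq^+$-homogeneity will extend the induced isomorphism $B \to B'$ to the required $g \in \Aut(D_\Gamma)$.

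First, I would apply Lemma~\ref{amal} to the two $\leq^+$-embeddings $A \to B$ given by the inclusion $\iota$ and by $\iota \circ h$, taking the free amalgam. This produces $F \in \C_\Gamma$ together with $\leq^+$-embeddings $g_1, g_2 : B \to F$ satisfying $g_1(h(a)) = g_2(a)$ for all $a \in A$, and freeness ensures $g_1(B) \cap g_2(B) = g_1(A) = g_2(A)$. Since $B \leq^+ D_\Gamma$ is finitely generated, Theorem~\ref{11}(3) applied to $g_1 : B \to F$ produces an embedding $\eta : F \to D_\Gamma$ with $\eta \circ g_1 = \mathrm{id}_B$ and $\eta(F) \leq^+ D_\Gamma$. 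Set $B' = \eta(g_2(B))$. Transitivity of $\leq^+$ gives $B' \leq^+ D_\Gamma$; the map $\sigma = \eta \circ g_2 : B \to B'$ is an isomorphism with $\sigma(a) = \eta(g_1(h(a))) = h(a)$ on $A$; and injectivity of $\eta$ forces $B \cap B' = \eta(g_1(A)) = A$.

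Finally, since $B$ and $B'$ are finitely generated $\leq^+$-subsets of $D_\Gamma$ and $\sigma$ is an isomorphism between them, the $\leq^+$-homogeneity of $D_\Gamma$ (Theorem~\ref{11}) extends $\sigma$ to some $g \in \Aut(D_\Gamma)$. Then $g|_A = h$ and $gB = B'$, so $B \cap gB = A$, as required. The only subtle point is ensuring $\sigma$ restricts to $h$ rather than $h^{-1}$: this forces one to amalgamate over the pair $(\iota, \iota \circ h)$, not $(\iota, \iota \circ h^{-1})$. The freeness of the amalgam, which is what delivers $B \cap B' = A$ rather than something potentially larger, is the other point that must be made explicit; beyond these two bookkeeping issues there is no real obstacle, as the construction is a direct assembly of Lemma~\ref{amal} with the extension and homogeneity clauses of Theorem~\ref{11}.
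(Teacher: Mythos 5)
Your proof is correct and follows essentially the same route as the paper's: the paper builds the free amalgam of $B$ with a copy $B'$ of itself over $A$ (glued along $h$) by hand, embeds it over $B$ via Theorem~\ref{11}(3), and extends by $\leq^+$-homogeneity, which is exactly your assembly of Lemma~\ref{amal} with the extension and homogeneity clauses. The two bookkeeping points you flag (the orientation of $h$ and the disjointness coming from freeness of the amalgam) are handled explicitly in the paper's construction, so nothing further is needed.
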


\begin{proof} Let $B'$ be any set with $B \cap B' = A$ and $\vert B \setminus A\vert = \vert B'\setminus A\vert$. Extend $h$ to a bijection $s : B \to B'$. Let $E$ be the set of directed edges in $B$ and define a digraph relation $E'$ on $B'$ by $(x,y) \in E' \Leftrightarrow (s^{-1}x,s^{-1}y) \in E$. As $s$ restricted to $A$ is $h$ and this is a digraph isomorphism, we have $E \cap A^2 = E' \cap A^2$. Clearly $s$ is then a digraph isomorphism. Let $F = B \cup B'$ with digraph relation $E \cup E'$. So $F$ is the free amalgam of $B$ and $B'$ over $A$ and $B, B' \leq^+ F \in \C_\Gamma$. By the extension property (Theorem \ref{11}(3)) over $B$, we can assume that $F \leq^+ D_\Gamma$. Then $s : B \to B'$ extends to an automorphism $g$ of $D_\Gamma$ (by the `Moreover' in Theorem \ref{11}), and this has the required properties.
\end{proof}

 If $a, b \in \DG$ and $a\neq b$, let $\Delta(a,b)$ be the orbital digraph which has $(a,b)$ as an edge. So this is the digraph with vertex set $\DG$ and edge set the $\Aut(\DG)$-orbit which contains $(a,b)$. By D. G. Higman's criterion, to prove the primitivity, it will be enough to show that each such $\Delta(a,b)$ is connected (meaning that its underlying undirected graph is connected).
 
 \begin{lemma}\label{13} If $(a,b)$ and $(a,b')$ are in the same $\Aut(\DG)$-orbit, and $\Delta(b,b')$ is connected, then $\Delta(a,b)$ is connected (of diameter at most twice that of $\Delta(b,b')$).
 \end{lemma}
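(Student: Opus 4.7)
The plan is to transfer paths from the orbital graph $\Delta(b,b')$ to $\Delta(a,b)$, with each edge of the former lifting to a length-two path in the latter. The hypothesis that $(a,b)$ and $(a,b')$ lie in the same $\Aut(\DG)$-orbit is used to identify $\Delta(a,b)$ with $\Delta(a,b')$ as digraphs (their edge sets are literally the same orbit).

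First I would set up the key transfer. Given a directed edge $(y,z)$ of $\Delta(b,b')$, choose $h \in \Aut(\DG)$ with $h(b) = y$ and $h(b') = z$, and put $x = h(a)$. Applying $h$ to the ordered pairs $(a,b)$ and $(a,b')$ shows that $(x,y)$ is in the $\Aut(\DG)$-orbit of $(a,b)$ and that $(x,z)$ is in the orbit of $(a,b')$; by hypothesis these two orbits coincide, so both $(x,y)$ and $(x,z)$ are directed edges of $\Delta(a,b)$. Thus in the underlying undirected graph of $\Delta(a,b)$, the vertices $y$ and $z$ share the common neighbour $x$, giving a walk $y\,\text{--}\,x\,\text{--}\,z$ of length $2$. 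The case where it is $(z,y)$ rather than $(y,z)$ that lies in the orbit of $(b,b')$ is handled in exactly the same way.

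To finish, take arbitrary $u,v \in \DG$. By connectedness of $\Delta(b,b')$ there is an undirected path $u = u_0, u_1, \ldots, u_k = v$ in $\Delta(b,b')$ with $k$ at most the diameter of $\Delta(b,b')$. Inserting the length-two detour constructed above in place of each edge $\{u_i, u_{i+1}\}$ produces a walk of length $2k$ from $u$ to $v$ in $\Delta(a,b)$. Hence $\Delta(a,b)$ is connected, with diameter at most twice that of $\Delta(b,b')$.

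There is no serious obstacle here; the whole argument is a straightforward transfer. The only point worth being careful about is keeping the distinction between directed edges and edges of the underlying undirected graph straight, since $\Delta(\cdot,\cdot)$ is defined as a single $\Aut(\DG)$-orbit of ordered pairs while connectedness refers to the undirected structure.
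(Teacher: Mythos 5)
Your proof is correct and follows essentially the same route as the paper: the paper observes that $b\,a\,b'$ is an undirected path of length $2$ in $\Delta(a,b)$ and translates it by automorphisms onto each edge of $\Delta(b,b')$, which is exactly your common-neighbour construction. Nothing to add.
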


\begin{proof} There is an (undirected) $\Delta(a,b)$-path $bab'$ from $b$ to $b'$. Thus if $(b_1,b_2)$ is an edge in $\Delta(b,b')$ there is a $\Delta(a,b)$-path of length 2 from $b_1$ to $b_2$. As $\Delta(b,b')$ is connected, if $x,y \in \DG$ there is a $\Delta(b,b')$-path from $x$ to $y$, and therefore there is a $\Delta(a,b)$-path from $x$ to $y$ with at most twice as many edges.
\end{proof}

\begin{lemma}\label{14} If $a, b \in D_\Gamma$ are distinct vertices and $\desc(a)\cap \desc(b) = \emptyset$, then $\Delta(a,b)$ is connected (of diameter at most 4).
\end{lemma}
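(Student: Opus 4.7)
The plan is to prove the stronger statement that $\Delta(a,b)$ has diameter at most $2$, by constructing, for arbitrary distinct $x,y \in D_\Gamma$, a common ``partner'' vertex $c \in D_\Gamma$ such that both $(x,c)$ and $(y,c)$ lie in the $\Aut(D_\Gamma)$-orbit of $(a,b)$. The walk $x - c - y$ in the underlying undirected graph of $\Delta(a,b)$ then witnesses connectedness. The vertex $c$ will be produced by building a suitable object $B_3 \in \C_\Gamma$ and embedding it into $D_\Gamma$ via Theorem~\ref{11}(3).

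Fix f.g.\ $A \leq^+ D_\Gamma$ with $\{x,y\} \subseteq A$ and f.g.\ $E \leq^+ D_\Gamma$ with $\{a,b\} \subseteq E$, using Theorem~\ref{11}(2). Take two disjoint abstract copies $E^{(x)}, E^{(y)}$ of $E$, with marked pairs $(a_x,b_x)$ and $(a_y,b_y)$. Form $B_1 := A \sqcup_{\desc(x)} E^{(x)}$ by the free amalgamation of Lemma~\ref{amal}, identifying $\desc(x) \leq^+ A$ with $\desc(a_x) \leq^+ E^{(x)}$; then form $B_2 := B_1 \sqcup_{\desc(y)} E^{(y)}$, identifying $\desc(y)$ with $\desc(a_y)$; and finally let $B_3$ be the quotient of $B_2$ that identifies $\desc(b_x)$ with $\desc(b_y)$ along a fixed isomorphism of rooted copies of $\Gamma$, collapsing $b_x, b_y$ to a single vertex $b^* \in B_3$. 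A direct computation of the local descendant sets shows $B_3 \in \C_\Gamma$ and $A, E^{(x)}, E^{(y)} \leq^+ B_3$: the key observation is that no directed edge out of $\desc(b^*)$ leaves $\desc(b^*)$, so the identification creates no new directed paths and each $\desc_{B_3}(v)$ is canonically isomorphic to its $B_2$-counterpart. Now apply Theorem~\ref{11}(3) to the $\leq^+$-inclusion $A \hookrightarrow B_3$ to obtain $g : B_3 \to D_\Gamma$ fixing $A$ pointwise with $g(B_3) \leq^+ D_\Gamma$, and set $c := g(b^*)$; injectivity of $g$ and the fact that $b^* \notin A$ ensure $c \neq x, y$. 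Since $E^{(x)} \leq^+ B_3$, the image $g(E^{(x)}) \leq^+ D_\Gamma$ is a f.g.\ $\leq^+$-subset isomorphic to $E$ via a map sending $(a,b) \mapsto (x,c)$; by $\leq^+$-homogeneity (the ``Moreover'' clause of Theorem~\ref{11}), this isomorphism extends to an automorphism of $D_\Gamma$, so $(x,c) \in \Aut(D_\Gamma)\cdot(a,b)$. The identical argument with $E^{(y)}$ gives $(y,c) \in \Aut(D_\Gamma)\cdot(a,b)$, completing the path of length $2$.

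The main technical obstacle is that the third construction step is not a free amalgamation covered by Lemma~\ref{amal}, so both $B_3 \in \C_\Gamma$ and the $\leq^+$-conditions must be verified directly. The natural one-shot alternative, amalgamating $A$ against $E^{(x)} \sqcup_{\desc(b)} E^{(y)}$ over $\desc(x) \sqcup \desc(y)$, is unavailable, since $\desc(x) \cup \desc(y) \leq^+ A$ can fail: $A$ may contain a vertex whose descendant set is almost but not entirely inside $\desc(x) \cup \desc(y)$; indeed, the same phenomenon is why one cannot simply take $\desc(a) \cup \desc(b)$ itself as a $\leq^+$-subset witnessing the orbit of $(a,b)$. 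Performing the two $a$-identifications separately and only then merging the $b$-copies sidesteps this difficulty: each gluing is over a single $\leq^+$-subset, and the final identification only alters the structure within $\desc(b^*)$, where the $\leq^+$-relations transfer intact from $B_2$ thanks to the disjointness of the two copies of $E$ outside their common $\desc(b^*)$.
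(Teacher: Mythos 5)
There is a genuine gap, and it sits exactly at the point you flag as needing ``direct verification'': the claim that (the image of) $E^{(x)}$ is a $\leq^+$-subset of $B_3$. Your key observation --- that out-edges from $\desc(b^*)$ stay inside $\desc(b^*)$ --- takes care of the $\leq$-condition and of the isomorphism type of descendant sets, but the delicate half of $\leq^+$ is condition (i): no vertex \emph{outside} the subset may have all but finitely many of its descendants \emph{inside} it. Identifying $\desc(b_x)$ with $\desc(b_y)$ destroys precisely this. Nothing in the hypotheses of the lemma forces $\desc(a)\cup\desc(b)\leq^+ \DG$ (this is why the paper's own proof splits into two cases), so there may exist $w\in E$ with $w\notin\desc(a)\cup\desc(b)$ but $\desc(w)\setminus(\desc(a)\cup\desc(b))$ finite. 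Now take a pair with, say, $y\in\desc(x)$, so that $E^{(x)}\cap E^{(y)}=\desc(x)\cap\desc(y)=\desc(y)$ in $B_2$. Writing $\pi:B_2\to B_3$ for the quotient map, one has $\pi^{-1}\bigl(\pi(E^{(x)})\bigr)=E^{(x)}\cup\desc(b_y)$, and for the copy $w_y\in E^{(y)}$ of such a $w$ one computes $\desc_{B_3}(\pi(w_y))\setminus\pi(E^{(x)})=\pi\bigl(\desc(w_y)\setminus(\desc(a_y)\cup\desc(b_y))\bigr)$, which is finite, while $\pi(w_y)\notin\pi(E^{(x)})$. So condition (i) fails for $\pi(E^{(x)})$ in $B_3$, hence $g(\pi(E^{(x)}))$ is not a $\leq^+$-subset of $\DG$, and the $\leq^+$-homogeneity of Theorem \ref{11} cannot be invoked to put $(x,c)$ in the orbit of $(a,b)$. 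This is not merely a defect of the method: membership of $(x,c)$ in that orbit genuinely depends on which vertices of $\DG$ have almost all of their descendants inside $\desc(x)\cup\desc(c)$, and your $c$ gets this wrong. (The same issue puts $B_3\in\C_\Gamma$ itself in doubt, since that also requires condition (i) for each $\desc_{B_3}(v)$.)

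The paper's proof is built to avoid exactly this: it never identifies the two copies of $\desc(b)$. It first uses a free amalgam over $\desc(a)$ (together with Lemma \ref{13}) to replace the pair $(a,b)$ by a pair $(b,b')$ whose descendant sets lie in \emph{different} free factors, which automatically gives $\desc(b)\cup\desc(b')\leq^+\DG$; only then does it run the ``common partner'' argument, with $\desc(c)$ a completely fresh copy of $\Gamma$ amalgamated freely over $\varnothing$, so that every $\leq^+$-set it needs is a union of two copies of $\Gamma$ in different free factors. The price is a diameter bound of $4$ rather than $2$ --- which is all the lemma asserts. To salvage your approach you would first have to replace $(a,b)$ by such a pair, at which point you are reproducing the paper's argument.
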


\begin{proof} First suppose that $\desc(a)\cup\desc(b) \leq^+ \DG$. Given $x_1, x_2 \in \DG$, by Theorem \ref{11}(2) there is a finitely generated $Z \leq^+ \DG$ with $x_1, x_2 \in Z$. Let $X$ be the disjoint union of $Z$ and a copy $C$ of $\Gamma$. So $X$ is the free amalgam of $Z$ and $C$ over the empty set and $X \in  \C_\Gamma$. By Theorem \ref{11}(3) we may assume that $X \leq^+ \DG$. Let $c \in \DG$ be the generator of $C$. Then $\desc(c)\cap \desc(x_i) = \emptyset$ and $\desc(x_i) \cup \desc(c) \leq^+ \DG$ (for $i = 1,2$). It follows (using the `Moreover' part of Theorem \ref{11}) that $(c,x_1)$ and $(c,x_2)$ are edges in $\Delta(a,b)$. So $\Delta(a,b)$ has diameter 2. 

In general, there is $b' \in \DG$ such that $(a,b')$ is an edge in $\Delta(a,b)$ and f.g. $B, B' \leq^+ \DG$ with $a,b \in B$ and $a,b' \in B'$ satisfying $B\cup B' \leq^+ \DG$ and $B \cap B' = \desc(a)$ (again, this uses free amalgamation and property (3) of $\DG$; alternatively we can apply Lemma \ref{12new}). Then $b,b'$ are as in the first part of the proof and so $\Delta(b,b')$ has diameter 2. Therefore by Lemma \ref{13}, $\Delta(a,b)$ has diameter at most $4$. 
 \end{proof}
 
 The main work is in proving:
 
 \begin{proposition} \label{mainprop2} Suppose $a,b \in \DG$ are distinct vertices and $a\not\in \desc(b)$ and $b \not\in\desc(a)$. Then there exist $r \in \N$ and $g_1, \ldots, g_r \in \Aut(\DG)$ such that, setting $b_0 = a$, $b_1=b$ and $b_{i+1}=g_ib_i$ for $i \geq 1$, we have $g_i \in \Aut(\DG/b_{i-1})$ and $\desc(b_r)\cap \desc(b_{r+1}) = \emptyset$.
 \end{proposition}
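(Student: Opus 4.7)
Plan. The idea is to build the walk $b_0,b_1,b_2,\ldots$ inductively using Lemma~\ref{12new}. Given $b_{i-1}$ and $b_i$, apply Lemma~\ref{12new} with $A = \desc(b_{i-1})$ and $B$ a finitely generated $\leq^+$-subset of $\DG$ containing $\{b_{i-1}, b_i\}$ (available by Theorem~\ref{11}(2)), together with a chosen $h_i \in \Aut(\desc(b_{i-1}))$. This produces $g_i \in \Aut(\DG/b_{i-1})$ extending $h_i$ with $B \cap g_iB = \desc(b_{i-1})$; set $b_{i+1} = g_ib_i$. Writing $I_i := \desc(b_{i-1}) \cap \desc(b_i)$, the relation $B \cap g_iB = \desc(b_{i-1})$ together with $g_i(I_i) = \desc(b_{i-1}) \cap \desc(b_{i+1})$ (since $g_i$ fixes $b_{i-1}$ and sends $b_i$ to $b_{i+1}$) yields
\[
I_{i+1} \;=\; I_i \cap g_i(I_i) \;=\; \desc(b_{i-1}) \cap \desc(b_i) \cap \desc(b_{i+1}),
\]
and by induction $I_n \subseteq \bigcap_{j \leq n} \desc(b_j)$. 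Thus each $I_n$ is a finitely generated $\leq^+$-subset of the copy $\desc(b_{n-1}) \cong \Gamma$, and the action of $g_n$ on $I_n$ corresponds to the chosen $h_n \in \Aut(\Gamma)$.

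This reduces the problem to: pick the $h_n$'s so that $I_n = \varnothing$ for some $n$. For a nonempty finitely generated $\leq^+$-subset $J \subsetneq \Gamma$, set $\delta(J) = \min\{d : J \cap \Gamma^d \neq \varnothing\}$ and $\nu(J) = |J \cap \Gamma^{\delta(J)}|$. If $J \cap \Gamma^{\delta(J)}$ is a \emph{proper} subset of $\Gamma^{\delta(J)}$ then by P3 (transitivity of $\Aut(\Gamma)$ on $\Gamma^{\delta(J)}$) there is $h \in \Aut(\Gamma)$ with $h(J \cap \Gamma^{\delta(J)}) \neq J \cap \Gamma^{\delta(J)}$; since these two sets have the same (finite) cardinality, this forces $|(J \cap hJ) \cap \Gamma^{\delta(J)}| < \nu(J)$. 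Iterating this kind of step strictly decreases $\nu$ until the minimum layer is empty in $I_n$, whereupon $\delta$ strictly increases.

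The main obstacle is the pathological shape $J = \Gamma^{\geq \delta(J)}$, where every layer-preserving $h$ fixes $J$ setwise and no single shrinkage works in the current frame. My remedy is a trivial step $h_i = \mathrm{id}$: this keeps $I_{i+1} = I_i$ unchanged as a set but the next iteration reinterprets $I_i$ as a $\leq^+$-subset of $\desc(b_i) \cong \Gamma$ rather than $\desc(b_{i-1})$. The crux is to verify that in this new frame $I_i$ cannot again be a full lower half $\Gamma^{\geq \delta'}(b_i)$: since $b_i \notin \desc(b_{i-1})$, condition (i) of P2 forces $\desc(b_i) \setminus \desc(b_{i-1})$ to be infinite, and a structural analysis of the finitely-generated $\leq^+$-inclusion $I_i = \desc(b_{i-1}) \cap \desc(b_i) \leq^+ \desc(b_i)$ (using P2 again to pin down how generators of $I_i$ sit across layers of $\desc(b_i)$) exhibits a layer of $\desc(b_i)$ that meets $I_i$ but is not contained in it, returning us to the regime of the previous paragraph. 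This frame-shift argument, combined with a finiteness bookkeeping on the generators of the decreasing sequence $I_n$ (which is controlled because $I_1$ is finitely generated and $I_n \subseteq I_1$), shows that the alternating shrinkage/trivial-step strategy terminates with $I_{r+1} = \varnothing$ for some $r$. The frame-shift step and its accompanying bookkeeping are where I expect the technical difficulty to lie.
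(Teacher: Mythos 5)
Your setup --- building the chain with Lemma \ref{12new} so that $\desc(b_i)\cap\desc(b_{i+1})\subseteq\desc(b_{i-1})$, and using P3 to move a proper nonempty subset of a layer off itself --- is essentially the paper's, but your termination argument has a genuine gap. Your progress measure is the pair $(\delta(I_n),\nu(I_n))$, and when the minimal layer of $I_n$ empties you allow $\delta$ to increase. Nothing bounds how far $\delta$ can increase: a finitely generated $\leq^+$-subset of $I_1$ can have all of its generators at arbitrarily great depth, and there exist infinite strictly decreasing chains of nonempty f.g.\ $\leq^+$-subsets (e.g.\ $\desc(v_1)\supsetneq\desc(v_2)\supsetneq\cdots$ with $v_{j+1}\in\desc(v_j)$), so ``$I_n\subseteq I_1$ with $I_1$ f.g.'' is not a termination certificate. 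The missing ingredient is the paper's Lemma \ref{110} together with Claims 2--4 in its proof of Proposition \ref{mainprop2}: one fixes a single level $n$ at the outset (large enough that $Y_1=\Gamma(b_1)\cap\Gamma(b_2)$ is generated by its part at level at most $n-K$) and proves, using the equivalence relations $\sigma_{b_j}$ (transitive closure of ``descendant sets meet''), that \emph{every} later intersection $Y_i$ is still generated by its slice $Z_i$ at that same level $n$. This is exactly where your bookkeeping would break: a deep element of $Y_{i-1}\cap g_iY_{i-1}$ has level-$n$ ancestors $x\in Y_{i-1}$ and $x'\in g_iY_{i-1}$ which need not coincide, and one needs the $\sigma$-class argument to conclude that $x'$ already lies in $Z_{i-1}$ and hence in $Z_i$. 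Once that is in place, the finite sets $Z_i\subseteq\Gamma^n(b_i)$ strictly decrease and the proof closes; without it, the intersection can remain nonempty forever while retreating to deeper and deeper levels.

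A smaller point: the ``pathological shape'' $I_i=\Gamma^{\geq\delta}(b_{i-1})$ that motivates your frame-shift cannot occur and needs no new machinery. It would make $\desc(b_{i-1})\setminus\desc(b_i)$ finite, whence $b_{i-1}\in\desc(b_i)$ by clause (i) of the definition of $\leq^+$, which is excluded (the paper rules out the analogous degenerate case $Z_{i-1}=\Gamma^n(b_{i-1})$ in exactly this way, via property (1) of Theorem \ref{11}). So the frame-shift step addresses a non-problem, while the real difficulty --- controlling the depth at which the shrinking intersections are generated --- is deferred to an unproved ``bookkeeping'' step.
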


We postpone the proof for now, and carry on with the proof of Theorem \ref{main}.

\begin{lemma}\label{16} Suppose $a,b \in \DG$ are distinct vertices and $a\not\in \desc(b)$ and $b \not\in\desc(a)$. Then $\Delta(a,b)$ is connected. 
\end{lemma}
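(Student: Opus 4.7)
The plan is to chain together the conclusion of Proposition \ref{mainprop2} with the previous two connectivity lemmas via a downward induction. First I would apply Proposition \ref{mainprop2} to the pair $a,b$ (which satisfies its hypotheses), obtaining $r \in \N$, automorphisms $g_1,\ldots,g_r \in \Aut(\DG)$, and a sequence $b_0 = a$, $b_1 = b$, $b_{i+1} = g_i b_i$ for $1 \leq i \leq r$, where each $g_i$ fixes $b_{i-1}$ and $\desc(b_r) \cap \desc(b_{r+1}) = \emptyset$.

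At the far end of this chain, Lemma \ref{14} immediately gives that $\Delta(b_r, b_{r+1})$ is connected. I would then argue by downward induction on $i$, descending from $i = r$ to $i = 0$, that $\Delta(b_i, b_{i+1})$ is connected. For the inductive step, assume $\Delta(b_i, b_{i+1})$ is connected for some $1 \leq i \leq r$. Since $g_i \in \Aut(\DG / b_{i-1})$ and $g_i b_i = b_{i+1}$, the ordered pairs $(b_{i-1}, b_i)$ and $(b_{i-1}, b_{i+1})$ lie in the same $\Aut(\DG)$-orbit. Lemma \ref{13}, applied with $a \mapsto b_{i-1}$, $b \mapsto b_i$, $b' \mapsto b_{i+1}$, then yields that $\Delta(b_{i-1}, b_i)$ is connected. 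Taking $i = 0$ delivers the desired conclusion that $\Delta(a,b) = \Delta(b_0, b_1)$ is connected.

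The entire difficulty of Lemma \ref{16} has been absorbed into Proposition \ref{mainprop2}, whose proof is postponed in the paper; once that proposition is granted, no further obstacle remains. The deduction above is a clean finite induction requiring neither amalgamation nor any additional combinatorial input, and it should fit comfortably into a short paragraph in the final write-up.
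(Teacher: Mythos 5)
Your argument is correct and is essentially identical to the paper's own proof: the paper likewise applies Lemma \ref{14} to $(b_r,b_{r+1})$ and then walks back down the chain with Lemma \ref{13}, using that $g_i$ fixes $b_{i-1}$ so that $(b_{i-1},b_i)$ and $(b_{i-1},b_{i+1})$ lie in the same orbit. No further comment is needed.
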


\begin{proof} Let $r$,  $g_i$ and $b_i$ be as in Proposition \ref{mainprop2}. By Lemma \ref{14}, $\Delta(b_r, b_{r+1})$ is connected. So by Lemma \ref{13}, $\Delta(b_{r-1}, b_r)$ is connected. Proceeding in this way, we obtain that $\Delta(b_0, b_1)$ is connected.
\end{proof}

The remaining case to consider is:

\begin{lemma} \label{17} Suppose $a \in \desc(b)$ (and $a\neq b$). Then $\Delta(a,b)$ is connected.
\end{lemma}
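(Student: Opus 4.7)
The plan is to reduce to Lemma~\ref{16}: I will produce a vertex $b'\in\DG$ such that $(a,b')$ lies in the same $\Aut(\DG)$-orbit as $(a,b)$ and such that $b,b'$ are distinct with neither lying in the descendant set of the other. Once this is done, Lemma~\ref{16} yields connectivity of $\Delta(b,b')$, and Lemma~\ref{13} then delivers connectivity of $\Delta(a,b)$.

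The key idea is to amalgamate over $A := \desc(a)$: amalgamating over $\varnothing$ (as in Lemma~\ref{14}) would not preserve $a$, whereas amalgamating over $A$ keeps $a$ fixed while separating $b$ from its copy. Concretely, by Theorem~\ref{11}(2) choose a f.g.\ $C \leq^+ \DG$ containing $\{a,b\}$; then $A \leq^+ C$. Form the free amalgam $F = C \cup_A C'$ of $C$ with an isomorphic copy $C'$ over $A$, via an isomorphism $\sigma : C \to C'$ fixing $A$ pointwise; as noted before Lemma~\ref{amal}, $F \in \C_\Gamma$ with $C, C' \leq^+ F$. Using Theorem~\ref{11}(3) applied to the inclusion $C \hookrightarrow F$, identify $F$ with a $\leq^+$-subset of $\DG$ containing $C$. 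The bijection $\tau : F \to F$ given by $\tau|_C = \sigma$ and $\tau|_{C'} = \sigma^{-1}$ is an automorphism of $F$; by $\leq^+$-homogeneity it extends to some $g \in \Aut(\DG)$. Setting $b' := gb = \sigma(b)$, we have $ga = a$, so $(a,b') = g(a,b)$ lies in the same orbit as $(a,b)$.

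It remains to verify that $b \ne b'$ and neither of $b,b'$ is a descendant of the other. Since $\desc(b) \subseteq C$ and $\desc(b') \subseteq C'$, any element of $\desc(b) \cap \desc(b')$ lies in $C \cap C' = A = \desc(a)$. If $b' \in \desc(b)$, then $b' \in \desc(a)$; but also $a = ga \in g(\desc(b)) = \desc(b')$, so by P0 applied inside $\desc(a) \cong \Gamma$ (which forbids directed cycles through $a$) we get $b' = a$, and hence $b = g^{-1}b' = g^{-1}a = a$, contradicting $a \ne b$. The same reasoning rules out $b \in \desc(b')$ and the possibility $b = b'$. The hypotheses of Lemma~\ref{16} are thus met for $(b,b')$, and the result follows. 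No step here looks genuinely hard; the only delicate point is selecting the correct amalgamation base $A = \desc(a)$, which precisely balances the need to fix $a$ against the need to separate $b$ from $b'$.
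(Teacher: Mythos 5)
Your proof is correct and follows essentially the same route as the paper: produce $b'=gb$ with $g$ fixing $\desc(a)$ pointwise and $\desc(b)\cap\desc(b')=\desc(a)$, then apply Lemma~\ref{16} to $(b,b')$ and Lemma~\ref{13}. The paper simply invokes Lemma~\ref{12new} (with $h$ the identity on $\desc(a)$) in place of your inline free-amalgamation construction, and leaves implicit the verification that $b,b'$ satisfy the hypotheses of Lemma~\ref{16}, which you carry out explicitly.
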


\begin{proof} By Lemma \ref{12new}, there exists $g \in \Aut(\DG/a)$ with $\desc(b)\cap \desc(gb) = \desc(a)$. By Lemma \ref{16}, $\Delta(b,gb)$ is connected. So by Lemma \ref{13}, $\Delta(a,b)$ is connected.
\end{proof}

By D. G. Higman's criterion, Lemmas \ref{16} and \ref{17} establish Theorem \ref{main}. Thus, it remains to prove Proposition \ref{mainprop2}. We first recall some definitions and results from Section \ref{structuresection}.

We tend to write $\Gamma(a)$ for $\desc(a)$ in order to emphasise the isomorphism with $\Gamma$. We write $\Gamma^n(a)$ for the descendants reachable by an $n$-arc, and say that $b \in \Gamma^n(a)$ is \textit{at level $n$} with respect to $a$. Variations such as $\Gamma^{\geq n}(a)$ (for $\bigcup_{m\geq n} \Gamma^m(a)$) will also be used. 

For any $x \in \Gamma^\ell$ (with $\ell \geq 1$) consider the number of ancestors of $x$ in $\Gamma^1$. By P3 this depends only on $\ell$ and is non-decreasing as $\ell$ increases. Thus there is some value $k$ of $\ell$ for which it reaches a maximum size (which is necessarily less than $q$, the out-valency of $\Gamma$). Let $K = 2k-1$. 

Now we work in $\DG$. For $a, x,y \in \DG$ with $x,y \in \Gamma^\ell(a)$ and $\ell\geq k$, we write $\rho_a(x,y)$ to indicate that $x,y$ have the same ancestors in $\Gamma^{\ell-k+1}(a)$. So this is the relation $\rho$ on $\Gamma(a)$ used in Section \ref{structuresection}. This is an equivalence relation on $\Gamma^\ell(a)$ which is clearly invariant under the stabiliser $\Aut(\DG/a)$. Denote the equivalence class containing $x$ by $\rho_a[x]$.

\begin{lemma}\label{18}\begin{enumerate}
\item If $x \in \Gamma(a)$ and $y \in \Gamma^{\geq K}(x)$ and $\rho_a(z,y)$, then $z \in \Gamma(x)$.
\item If $x \in \Gamma^m(a)$ and $\ell \geq K$, then $\Gamma(x)\cap \Gamma^{m+\ell}(a)$ is a union of $\rho_a$-classes.
\item If $x,y \in \Gamma^\ell(a)$ and $\ell \geq k$ and $\Gamma(x)\cap \Gamma(y) \neq \emptyset$ then $\rho_a(x,y)$.
\end{enumerate}
\end{lemma}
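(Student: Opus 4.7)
The plan is to derive all three parts from the machinery already built for the abstract relation $\rho$ on $\Gamma$ in Section \ref{structuresection}, exploiting that $\Gamma(a) \cong \Gamma$ (by property (1) of Theorem \ref{11}) satisfies G0, G1, G3 and that $\rho_a$ is precisely the relation $\rho$ from Section \ref{structuresection} computed inside $\Gamma(a)$. The numerical role of $K = 2k-1$ is nothing but the threshold appearing in Lemma \ref{lemma3}, so the earlier results plug in directly.

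I will handle part (3) first, as it is a direct transcription: since $\ell \geq k$ and $\Gamma(x) \cap \Gamma(y) \neq \varnothing$, Lemma \ref{lemma2} applied inside $\Gamma(a)$ yields $\rho_a(x,y)$ immediately.

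For part (1), write $x \in \Gamma^m(a)$, so $y \in \Gamma^s(x)$ for some $s \geq K$ by P0/G0 (since $y$ is a descendant of $x$ sitting at level $m+s$ in $\Gamma(a)$). The key step is to apply Lemma \ref{lemma3} in $\Gamma(a)$ with $\beta = x$: since $s \geq 2k-1$, the $\rho$-class of $y$ computed in $\Gamma(a)$ coincides with the $\rho$-class of $y$ computed in $\Gamma(x)$. In our notation, $\rho_a[y] = \rho_x[y]$, and since $\rho_x[y] \subseteq \Gamma(x)$ tautologically, we obtain $z \in \Gamma(x)$.

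Part (2) then follows cleanly from part (1). Given $y \in \Gamma(x) \cap \Gamma^{m+\ell}(a)$ with $\ell \geq K$, G0 in $\Gamma(x)$ forces $y \in \Gamma^\ell(x)$, and part (1) gives $\rho_a[y] \subseteq \Gamma(x)$; since $\rho_a$ preserves levels we also have $\rho_a[y] \subseteq \Gamma^{m+\ell}(a)$ automatically, so $\Gamma(x) \cap \Gamma^{m+\ell}(a)$ is a union of $\rho_a$-classes. The main (and essentially only) obstacle is the bookkeeping of checking that the abstract $\rho$ from Section \ref{structuresection} coincides with the $\rho_a$ defined in $\DG$, which is just an unpacking of definitions; no genuinely new combinatorial argument is required.
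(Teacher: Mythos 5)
Your proof is correct and follows exactly the route the paper takes: part (3) is Lemma \ref{lemma2}, part (1) is a direct application of Lemma \ref{lemma3} with $\beta = x$ (using $K = 2k-1$), and part (2) follows from part (1). The extra bookkeeping you supply (levels via G0, identification of $\rho_a$ with the $\rho$ of Section \ref{structuresection}) is accurate and merely makes explicit what the paper leaves implicit.
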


\begin{proof} (1) This follows directly from Lemma \ref{lemma3}.

(2) This follows immediately from (1).

(3) This is Lemma \ref{lemma2}.
\end{proof}

Let $\ell \geq k$ and $a \in \DG$. For $x,y \in \Gamma^\ell(a)$ write $\sigma_a^0(x,y)$ if $\Gamma(x) \cap \Gamma(y)\neq \emptyset$ and let $\sigma_a$ be the transitive closure of this relation. Note that this is an equivalence relation (on each $\Gamma^\ell(a)$) which is clearly $\Aut(\DG/a)$-invariant.  Moreover, by (3) above, $\sigma_a(x,z)$ implies $\rho_a(x,z)$. Write $\sigma_a[x]$ for the $\sigma_a$-class containing $x$.

\begin{remark}\rm In the examples in \cite{Amato}, we have $\rho_a = \sigma_a$. It is not clear whether this holds for arbitrary $\Gamma$ satisfying P0-P3.
\end{remark}

\begin{lemma} \label{110} Suppose $b, b' \in \DG$ are such that if $x \in X = \Gamma(b)\cap \Gamma(b')$ then $x\in \Gamma^\ell(b) \Leftrightarrow x \in \Gamma^\ell(b')$ (so points in the intersection are at the same `level' with respect to $b$ and $b'$).
 Then there exists $n \geq K$  such that $X = \desc(X \cap \Gamma^{\leq n-K}(b))$. Moreover, for such an $n$, we have $X \cap \Gamma^n(b) = X \cap \Gamma^n(b')$ and if $y \in X\cap \Gamma^n(b)$, then $\sigma_b[y] = \sigma_{b'}[y]$. 
\end{lemma}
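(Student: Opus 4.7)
The plan is to handle the first two conclusions quickly and then focus on the equality of $\sigma$-classes, which is the substantive claim.

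For the existence of $n$, I would use property P2 applied in $\DG$: since $\Gamma(b') \leq^+ \DG$, the intersection $X = \Gamma(b) \cap \Gamma(b')$ is a finitely generated subset of $\Gamma(b)$. Fix a finite generating set $F \subseteq X$, choose $n_0$ with $F \subseteq \Gamma^{\leq n_0}(b)$, and take any $n \geq n_0 + K$. Because $X$, being an intersection of two descendant-closed sets, is itself closed under descent, one gets $X = \desc(F) = \desc(X \cap \Gamma^{\leq n-K}(b))$. The identity $X \cap \Gamma^n(b) = X \cap \Gamma^n(b')$ is then immediate from the level hypothesis on $X$; that same hypothesis will also yield the analogous generation condition with $b'$ in place of $b$, so the hypotheses on $b$ and $b'$ are symmetric.

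For the equality $\sigma_b[y] = \sigma_{b'}[y]$, by the symmetry just noted it suffices to prove $\sigma_b[y] \subseteq \sigma_{b'}[y]$ for $y \in X \cap \Gamma^n(b)$. Unwinding the transitive closure, I would reduce by induction on chain length to the key claim: if $z \in \Gamma^n(b)$ and $\sigma_b^0(y, z)$, then $z \in X \cap \Gamma^n(b')$ and $\sigma_{b'}^0(y, z)$. To prove this, pick $w \in \Gamma(y) \cap \Gamma(z)$, which lies in $\Gamma(b')$ since $y \in X$. The generation condition supplies $u \in X \cap \Gamma^{\leq n-K}(b)$ with $y \in \Gamma(u)$; then $y \in \Gamma^{\ell}(u)$ with $\ell = n - \ell_b(u) \geq K$, where $\ell_b(u)$ is the level of $u$ in $\Gamma(b)$. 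Lemma \ref{18}(3) gives $\rho_b(y, z)$, and Lemma \ref{18}(1), applied in $\Gamma(b)$ with $u$ in the role of $x$, then forces $z \in \Gamma(u) \subseteq \Gamma(b')$. Hence $z \in X$, the level hypothesis upgrades this to $z \in \Gamma^n(b')$, and $w \in \Gamma(y) \cap \Gamma(z)$ witnesses $\sigma_{b'}^0(y, z)$.

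The main obstacle will be this transfer step: showing that $z$, \emph{a priori} only known to lie in $\Gamma(b)$, actually descends from $b'$. The generation condition is there precisely to provide an ancestor $u \in X$ of $y$ deep enough inside $\Gamma(b)$ that Lemma \ref{18}(1) converts the $\rho_b$-equivalence of $y$ and $z$ into the statement that $u$ is also an ancestor of $z$; since $u \in X \subseteq \Gamma(b')$, this places $z$ inside $\Gamma(b')$ as well, after which everything else follows routinely from the level hypothesis.
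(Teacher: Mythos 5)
Your proposal is correct and follows essentially the same route as the paper: finite generation of $X$ from $\Gamma(b')\leq^+\DG$ gives the choice of $n$, and the deep ancestor $u\in X\cap\Gamma^{\leq n-K}(b)$ of $y$ combines with $\rho_b(y,z)$ (from Lemma \ref{18}(3)) to force $z\in\Gamma(u)\subseteq X$, whence the witness $w$ transfers $\sigma^0_b$ to $\sigma^0_{b'}$. The paper packages the ancestor step as Lemma \ref{18}(2) (i.e.\ $\rho_b[y]\subseteq X$, hence $\sigma_b[y]\subseteq X$) where you invoke Lemma \ref{18}(1) directly and make the chain induction explicit, but this is only a cosmetic difference.
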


\begin{proof} As $X$ is finitely generated, it will suffice to take $n$ large enough so that $\Gamma^{\leq n-K}$ contains a generating set for $X$. For the rest, note first  that if $y \in X \cap \Gamma^n(b)$, then by the opening assumption on levels, $y \in X\cap \Gamma^n(b')$. Moreover, the assumption on $n$ means that there is $x \in X$ such that $y \in \Gamma^{\geq K}(x)$. So by Lemma \ref{18}(2), $\rho_b[y] \subseteq X$. Thus $\sigma_b[y] \subseteq X$. Similarly $\sigma_{b'}[y] \subseteq X$. Now, if $\sigma^0_b(y,z)$ holds then $z \in X$ and so (by definition of $\sigma^0$) also $\sigma^0_{b'}(y,z)$ holds. The statement follows.
\end{proof}

\noindent\textit{Proof of Proposition \ref{mainprop2}.\/}  Let $X = \Gamma(a)\cap \Gamma(b)$. Clearly we may assume that $X$ is non-empty. Then $X \leq^+ \Gamma(a)$ (as $\Gamma(b)\leq^+ \DG$) and $X \neq \Gamma(a)$. By free amalgamation (over $\Gamma(a)$) there is a copy $b'$ of $b$ over $\Gamma(a)$ with $\Gamma(b)\cap \Gamma(b') = X$. More precisely, apply Lemma \ref{12new} with $A = \Gamma(a)$, $h$ the identity on $A$, and $B\leq^+ D_\Gamma$ finitely generated with $a, b \in B$. It follows that there is $g_1$ in the pointwise stabilizer $\Aut(\DG/\Gamma(a))$ of $\Gamma(a)$ such that $g_1(B) \cap B = \Gamma(a)$. Then, with $b' = g_1b$, we have $X \subseteq \Gamma(b)\cap \Gamma(b') \subseteq \Gamma(b) \cap B \cap g_1(B) = \Gamma(b) \cap \Gamma(a) = X$ as $g_1$ fixes all elements of $X$. Moreover, because $g_1$ fixes all elements of $X$ and sends $b$ to $b'$, the points in $X$ are at the same level with respect to $b$ and $b'$, as in Lemma \ref{110}. 

Let $b_0=a$, $b_1 = b$ and $b_2 = b'$. Suppose, for  $i\geq 2$, that  $g_i \in \Aut(\DG/ b_{i-1})$ and $b_{i+1} = g_ib_i$ is such that $\Gamma(b_{i+1}) \cap \Gamma(b_i) \subseteq \Gamma(b_{i-1})$ and $b_{i+1} \neq b_i$ (the existence of such $g_i$ will be proved later on.) We make a series of claims, refining the choice of the $g_i$ as we proceed. We may suppose $\Gamma(b_1) \cap \Gamma(b_2)\neq \emptyset$. 

\medskip

\textit{Claim 1:\/} For $i \geq 1$, if $x \in \Gamma(b_i) \cap \Gamma(b_{i+1})$, then $x$ is at the same level with respect to $b_i$ and $b_{i+1}$.

We prove this by induction, noting first that it holds for $i = 1$, by construction. By assumption on the $g_i$, we have  $x \in \Gamma(b_{i-1})\cap\Gamma(b_i)\cap \Gamma(b_{i+1})$. So by inductive assumption, $x$ has the  same level with respect to $b_{i-1}$ and $b_i$. But 
$\Gamma(b_{i-1})\cap \Gamma(b_{i+1})$ is $g_i(\Gamma(b_{i-1})\cap \Gamma(b_{i}))$, so a point in this intersection has the same level with respect to $b_{i-1}$ and $b_{i+1}$. Thus $x$ has the same level with respect to $b_{i-1}, b_i$ and $b_{i+1}$. \hfill ($\Box_{Claim\, 1}$)

\medskip

Choose $n \geq K$ such that $Y_1 = \Gamma(b_1) \cap \Gamma(b_2)$ is generated by its intersection with $\Gamma^{\leq n-K}(b_1)$. Let $Z_1 = Y_1 \cap \Gamma^n(b_1) = \Gamma^n(b_1)\cap \Gamma^n(b_2)$. By Lemma \ref{110} we have:

\medskip

\textit{Claim 2:\/} $Z_1$ is a union of sets which are simultaneously $\sigma_{b_1}$-classes and $\sigma_{b_2}$-classes. 

\medskip

\textit{Claim 3:\/} For $i \geq 2$, if $Z_i = \Gamma^n(b_i) \cap \Gamma^n(b_{i+1})$, then $Z_{i}$ is a union of sets which are simultaneously  $\sigma_{b_j}$-classes for all $1 \leq j \leq i+1$.

Note that Claim 1 and the fact that  $\Gamma(b_{i+1})\cap \Gamma(b_i) \subseteq \Gamma(b_{i-1})$ imply that $Z_i \subseteq Z_{i-1} \subseteq \ldots \subseteq Z_1$. By Claim 2, we can assume inductively that $Z_{i-1}$ is a union of subsets which are simultaneously $\sigma_{b_j}$-classes for $1 \leq j \leq i$. In particular, $Z_{i-1}$ is a union of sets which are simultaneously $\sigma_{b_{i-1}}$ and $\sigma_{b_i}$-classes. As $g_i \in \Aut(D_\Gamma/b_{i-1})$ and $g_ib_i = b_{i+1}$ it follows that $g_iZ_{i-1}$ is a  union of sets which are simultaneously $\sigma_{b_{i-1}}$ and $\sigma_{b_{i+1}}$-classes. Thus $Z_i = Z_{i-1}\cap g_iZ_{i-1}$ is a union of $\sigma_{b_{i-1}}$-classes, and all of these classes are also $\sigma_{b_j}$-classes for $1\leq j \leq i$ and $j = i+1$, as required. \hfill ($\Box_{Claim\, 3}$)

\medskip

\textit{Claim 4:\/} For $i \geq 1$ we have that $Y_i = \Gamma(b_i)\cap \Gamma(b_{i+1})$ is generated by its intersection with $\Gamma^{\leq n}(b_i)$.

For $i = 1$ this is by choice of $n$. So suppose $i \geq 2$. Then we have $Y_i = Y_{i-1}\cap g_iY_{i-1}$ and $Y_{i-1}, g_iY_{i-1}\subseteq \Gamma(b_{i-1})$. Suppose $y \in Y_i$ is at level $m > n$ in $\Gamma(b_{i-1})$ (and therefore also in $\Gamma^m(b_i)$ and $\Gamma^m(b_{i+1})$, by Claim 1). Then there exist $x \in Y_{i-1}$ and $x' \in g_iY_{i-1}$ which are at level $n$ (in $\Gamma(b_{i-1})$) and which are ancestors of $y$. Then, because of $y$, we have that $\sigma_{b_{i-1}}(x,x')$ holds. But $x \in Z_{i-1}$ and $Z_{i-1}$ is a union of $\sigma_{b_{i-1}}$-classes, so $x' \in Z_{i-1}$ and therefore $x' \in Y_i$. So $y$ has an ancestor in $Z_i$, as required. \hfill ($\Box_{Claim\, 4}$)

\medskip

Suppose now that we can choose the $g_i$ so that if $Z_i \neq \emptyset$ then $g_iZ_i \neq Z_i$, and therefore $Z_{i+1}$ is a proper subset of $Z_i$. As these sets are finite, for some $r$ we must have $Z_r = \emptyset$. It then follows from Claim 4 that $Y_r = \emptyset$, as required by the Proposition.

It remains to explain how to construct the $g_i$, for $i \geq 2$. We do this inductively, so suppose we have constructed up to $g_{i-1}$ and have $b_1,\ldots, b_i$ with the required properties. Suppose $Z_{i-1} \neq \emptyset$. We need to find $g_{i} \in \Aut(D_\Gamma/b_{i-1})$ such that (with the above notation)  $Y_i  \subseteq \Gamma(b_{i-1})$ and $g_iZ_{i-1} \neq Z_{i-1}$. First, note that $Z_{i-1}$ is a proper subset of $\Gamma^n(b_{i-1})$. If not, then $\Gamma^n(b_{i-1}) \subseteq \Gamma^n(b_i)$ and as these are finite sets of the same size, we obtain that $\Gamma^n(b_{i-1}) = \Gamma^n(b_i)$. So $b_{i-1}$ and $b_i$ have the same descendants in level $n$ and as $b_{i-1}\neq b_i$, this contradicts property (1) in the definition of $D_\Gamma$ in Theorem \ref{11}.  It then follows by property P3 of $\Gamma$ that there is an automorphism $h$ of $\Gamma(b_{i-1})$ with $hZ_{i-1} \neq Z_{i-1}$. We claim that $h$ extends to an automorphism $g_i$ of $D_\Gamma$ with the property that $\Gamma(g_ib_i) \cap \Gamma(b_i) \subseteq \Gamma(b_{i-1})$, and this will suffice. 

To do this, let $B \leq^+ D_\Gamma$ be finitely generated and contain $b_{i-1}, b_i$ (by Property (2) of $D_\Gamma$ in Theorem \ref{11}). Applying Lemma \ref{12new} (with the given $h$ and $A = \Gamma(b_{i-1})$) gives the required $g_i$. This completes the proof of Proposition \ref{mainprop2}. \hfill $\Box$

\end{document}